\definecolor{links}{rgb}{.2,.1,.5}
\definecolor{cites}{rgb}{.5,.1,.2}
\newcommand\QQ{{\mathbb Q}}
\newcommand\RR{{\mathbb R}}
\newcommand\ZZ{{\mathbb Z}}
\newcommand\PP{{\mathbb P}}
\newcommand{\pro}[2]{\langle #1, #2 \rangle} 
\newcommand{\SetOf}[2]{\left\{#1\vphantom{#2}\,\right.\left|\,\vphantom{#1}#2\right\}}
\renewcommand\Vert{\operatorname{Vert}} 
\DeclareMathOperator\conv{conv}
\DeclareMathOperator\opp{opp}
\DeclareMathOperator\Hom{Hom}
\newcommand\polymake{\texttt{polymake}\xspace}
\newcommand\Obro{{\O}bro\xspace}
\newcommand\neigh{\operatorname{N}} 
\newcommand{\theoremname}{Theorem}
\newcommand{\corollaryname}{Corollary}
\newcommand{\lemmaname}{Lemma}
\newcommand{\propositionname}{Proposition}
\newcommand{\conjecturename}{Conjecture}
\newcommand{\remarkname}{Remark}
\newcommand{\examplename}{Example}
\newcommand{\definitionname}{Definition}
\newcommand{\questionname}{Question}
\newcommand{\answername}{Answer}
\theoremstyle{plain}
    \newtheorem{theorem}{\theoremname}
    \newtheorem*{theorem*}{\theoremname}
  \newaliascnt{corollary}{theorem}
    \newtheorem{corollary}[corollary]{\corollaryname}
  \newaliascnt{lemma}{theorem}
    \newtheorem{lemma}[lemma]{\lemmaname}
  \newaliascnt{proposition}{theorem}
    \newtheorem{proposition}[proposition]{\propositionname}
\theoremstyle{definition}
  \newaliascnt{conjecture}{theorem}
    \newtheorem{conjecture}[conjecture]{\conjecturename}
  \newaliascnt{remark}{theorem}
    \newtheorem{remark}[remark]{\remarkname}
    \newtheorem*{remark*}{\remarkname}
  \newaliascnt{example}{theorem}
    \newtheorem{example}[example]{\examplename}
  \newaliascnt{definition}{theorem}
    \newtheorem{definition}[definition]{\definitionname}
  \newaliascnt{question}{theorem}
  \newaliascnt{answer}{theorem}
    \newtheorem*{acknowledgment}{Acknowledgments}
\title[A bound for the splitting of smooth Fano polytopes with many vertices]{A bound for the splitting of smooth Fano polytopes with many vertices}
\author{Benjamin Assarf}
\address{
Benjamin Assarf \newline \mbox{ }\quad
TU-Berlin, Str. des 17. Juni 136, D-10623 Berlin, Germany
}
\email{assarf@math.tu-berlin.de}
\author{Benjamin Nill}
\address{
Benjamin Nill \newline \mbox{ }\quad
Stockholm University, Kräftriket,  SE-10691 Stockholm, Sweden
}
\email{nill@math.su.se}
\subjclass[2010]{52B20, 14M25, 14J45}
\keywords{toric Fano varieties, Fano polytopes, lattice polytopes, reflexive polytopes, smooth polytopes}
\begin{document}

\begin{abstract}
The classification of toric Fano manifolds with large Picard number corresponds to the classification of smooth Fano polytopes with large number of vertices. A smooth Fano polytope is a polytope that contains the origin in its interior such that the vertex set of each facet forms a lattice basis. Casagrande showed that any smooth $d$-dimensional Fano polytope has at most $3d$ vertices. Smooth Fano polytopes in dimension $d$ with at least $3d-2$ vertices are completely known. The main result of this paper deals with the case of $3d-k$ vertices for $k$ fixed and $d$ large. It implies that there is only a finite number of isomorphism classes of toric Fano $d$-folds $X$ (for arbitrary $d$) with Picard number $2d-k$ such that $X$ is not a product of a lower-dimensional toric Fano manifold and the
projective plane blown up in three torus-invariant points. This verifies the qualitative part of a conjecture in a recent paper by the first author, Joswig, and Paffenholz.
\end{abstract}

\maketitle
\section{Introduction and main results}

Let us first recall the basic definitions. We refer to \cite{Fanosurvey,Ewald} for more background. Let $N\cong\ZZ^d$ be a lattice with associated real vector space $N_\RR:=N\otimes_\ZZ\RR$ isomorphic to $\RR^d$. A \emph{polytope} $P$ is a convex, compact set in $N_\RR$, its $0$-dimensional faces are called \emph{vertices}, and its faces of codimension $1$ are called \emph{facets}. If every facet $F$ (of dimension $d-1$) of a $d$-dimensional polytope $P$ has exactly $d$ vertices (i.e., $F$ is a simplex), then $P$ is called \emph{simplicial}. The polytope $P$ is called a \emph{lattice polytope} if its vertices are lattice points
(i.e., elements of $N$).

\begin{definition}
  \begin{enumerate}
    \item A polytope $P\subset N_\RR$ is called \emph{smooth Fano polytope}, if
      \begin{itemize}
        \item $P$ is a lattice polytope, and
        \item $P$ is full-dimensional and contains the origin $0$ as an interior point, and
        \item for each facet $F$ of $P$, the vertex set $\Vert{F}$ is a lattice basis of $N$.
      \end{itemize}

    \item Two smooth Fano polytopes are \emph{lattice equivalent}, if their vertex sets are in bijection by an affine-linear lattice automorphism.
\end{enumerate}
\end{definition}

\begin{remark}
We decided to keep the notion of a smooth Fano polytope in order to be consistent with existing literature. However, we remark that there exists also the definition of a \emph{smooth polytope} as a lattice polytope with unimodular vertex cones. A smooth Fano polytope is \emph{not} a smooth polytope (but its dual polytope is).
\end{remark}

Note that any smooth Fano polytope $P$ is necessarily simplicial. In each dimension there exist only finitely many smooth Fano polytopes up to lattice equivalence (we refer to the survey
\cite{Fanosurvey}). In 2007, \Obro described an explicit classification algorithm in each dimension $d$, see \cite{Obro-alg,OebroPhD}. His implementation gave complete classifications of lattice
equivalence classes of smooth Fano polytopes up to dimension $8$ extending the previous existing classification results up to dimension $5$; see \cite{Batyrev81,WW82,Batyrev99,Sato00,NillKreuzer09}. Using a variant of \Obro's algorithm, the classification in dimension~$9$ was done by Lorenz and Paffenholz~\cite{smoothreflexive}. The large number of $8\,229\,721$ smooth Fano polytopes in dimension $9$ indicates that complete classifications in much higher dimensions are not feasible. This motivates to focus on finding sharp bounds on important invariants and to classify the extreme cases.

\smallskip

When considering smooth Fano polytopes with large number of vertices, one needs to start with dimension two.
Let $P_6$ be the smooth Fano polygon with $6$ vertices as given in \autoref{hexagon-fig}. This polygon has the maximal number of vertices possible in dimension $2$.

\begin{figure}[tb]
  \centering
  \begin{tikzpicture}[scale=0.9]
    \tikzstyle{edge} = [draw,thick,-,black]

    \foreach \x in {-1,0,1}
    \foreach \y in {-1,0,1}
    \fill[gray] (\x,\y) circle (1.5pt); ;

    \coordinate (v0) at (0,0);
    \coordinate (e1) at (1,0);
    \coordinate (e2) at (0,1);
    \coordinate (-e1+e2) at (-1,1);
    \coordinate (-e2+e1) at (1,-1);
    \coordinate (-e1) at (-1,0);
    \coordinate (-e2) at (0,-1);

    \draw[edge] (e1) -- (e2) -- (-e1+e2) -- (-e1) -- (-e2) -- (-e2+e1) -- (e1);

    \foreach \point in {e1,e2,-e1+e2,-e2+e1,-e1,-e2}
    \fill[black] (\point) circle (2pt);

  \end{tikzpicture}
  \caption{\label{hexagon-fig} The smooth Fano polygon $P_6$ corresponding to the del Pezzo surface $S_3$}
\end{figure}
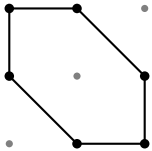

There is a natural direct sum operation on smooth Fano polytopes (also sometimes called free sum).

\begin{definition} Given smooth Fano polytopes $Q \subset N_\RR$ and $Q' \subset N'_\RR$, we define their \emph{direct sum} as
  \[
    Q\oplus Q'\ = \ \conv(Q \times \{0\} \cup \{0\} \times Q') \quad \subset \ N_\RR \times N'_\RR.
  \]
  Direct sums of smooth Fano polytopes are again smooth Fano polytopes.
  A smooth Fano polytope $P$ \emph{splits} into $Q$ and $Q'$ if $P$ is lattice equivalent to $Q \oplus Q'$.
  \label{free-sum-def}
\end{definition}

In 2006 Casagrande proved the following long-standing conjecture.

\begin{theorem}[Casagrande~\cite{Casagrande06}]\label{casa:theorem}
  Let $P$ be a $d$-dimensional smooth Fano polytope. Then $P$ has at most $3d$ vertices, equality holds if and only if $d$ is even and $P$ is lattice equivalent to $(P_6)^{\oplus d/2}$.
\end{theorem}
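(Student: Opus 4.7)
The plan is to prove the bound $|\Vert(P)| \le 3d$ by a local-to-global counting argument exploiting unimodularity of facets, and then to deduce the equality case via a splitting argument.

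\textbf{Setup.} First I would fix a facet $F$ of $P$ with vertex set $B = \{v_1, \ldots, v_d\}$, which is a lattice basis by smoothness. Every other vertex $w$ of $P$ can then be written uniquely in coordinates $w = \sum_i a_i(w) v_i$ with $a_i(w) \in \ZZ$. Since $P$ is reflexive --- the defining functional $u_F$ of $F$ lies in the dual lattice $M$ because $\langle u_F, v_i \rangle = 1$ for all $i$ --- every vertex $w \notin F$ satisfies the integrality estimate $\sum_i a_i(w) \le 0$. For each ridge $F \setminus \{v_i\}$, the unique vertex $w_i$ in the facet adjacent to $F$ across that ridge is forced by unimodularity to satisfy $a_i(w_i) = -1$ and $\sum_{j \ne i} a_j(w_i) \le 1$. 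This already yields at least $2d$ vertices: the $v_i$ together with the distinct $w_i$.

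\textbf{Bounding the additional vertices.} To push from $2d$ to $3d$ I would attempt to show that the vertices of $P$ outside $B \cup \{w_1, \ldots, w_d\}$ number at most $d$, each attached to some direction $v_i$ in a controlled way. A natural tool here is Batyrev's theory of primitive collections and primitive relations: every primitive collection $C$ yields an identity $\sum_{v \in C} v = \sum_j c_j u_j$ with $c_j \in \ZZ_{\ge 0}$ and $\{u_j\}$ spanning a face of $P$, and smoothness puts a sign/degree restriction on these relations. An alternative, possibly cleaner, route is induction on $d$: if $P$ contains an antipodal pair $\pm v$ with $v \notin B$, projecting along $\ZZ v$ produces a smooth Fano polytope in dimension $d-1$ to which the inductive bound applies, and one tracks how many vertices of $P$ can map to the same vertex downstairs.

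\textbf{Equality case.} Assuming $|\Vert(P)| = 3d$, every inequality from the previous step must be tight. I expect this rigidity to force that for each $i$ both $v_i$ and $-v_i$ are vertices, and moreover that for each $i$ there is a $j$ with $-v_i - v_j$ also a vertex, producing a hexagonal $2$-face lattice-equivalent to $P_6$. These hexagons then partition the $3d$ vertices into $d/2$ pairwise lattice-complementary groups of six, yielding $P \cong (P_6)^{\oplus d/2}$ by \autoref{free-sum-def}; in particular $d$ must be even.

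\textbf{Main obstacle.} The technical heart of the argument is the middle step: bounding by $d$ the number of vertices beyond $B$ and the ridge-opposites $w_i$. A priori, a vertex with an unexpected coordinate pattern in $B$ --- neither a simple antipode $-v_i$ nor one of the $w_i$ --- could inflate the count, because the unimodularity constraint alone allows quite flexible integer coefficients $a_j(w_i)$. Ruling out such exotic configurations, or showing each one forces a collision with previously counted vertices, is where I expect the essential combinatorial work to reside, most likely via a finite but subtle case analysis on primitive relations or on the signs and supports of vertex coordinate vectors. The rigidity in the equality case should then follow relatively quickly once the bound itself is established.
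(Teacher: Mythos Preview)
The paper does not prove this theorem; it is quoted as a prior result of Casagrande~\cite{Casagrande06} and used as background. So there is no in-paper proof to compare against.

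That said, your sketch has a real gap at exactly the point you flag as the ``main obstacle'', and the missing idea is not a case analysis on primitive relations but the choice of facet. You fix an \emph{arbitrary} facet $F$ and observe that every other vertex lies on level $\le 0$ with respect to $u_F$. This alone gives no control on how many vertices sit on negative levels. Casagrande's key device (also used throughout this paper, see the notion of \emph{special facet}) is to choose $F$ so that the vertex sum $s_P = \sum_{v \in \Vert P} v$ lies in the cone over $F$; then $\langle u_F, s_P\rangle \ge 0$, i.e.\ $d + \sum_{j\le -1} j\,\eta_j \ge 0$, which immediately bounds the total number of vertices on negative levels by $d$. Combined with $\eta_1=d$ and $\eta_0\le d$ (each level-$0$ vertex is an opposite vertex, \autoref{lem:nill_5.5}), this gives $|\Vert P|\le 3d$ in one line. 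Your proposed induction via projecting along an antipodal pair, or a primitive-collection argument, does not substitute for this global constraint.

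A smaller error: the ridge-opposites $w_i=\opp(F,v_i)$ are \emph{not} in general distinct (two vertices of $F$ can share an opposite vertex; cf.\ the set $B$ in the paper), so your ``already $2d$ vertices'' claim is false as stated. For the equality case, once the level counts are forced to $(\eta_1,\eta_0,\eta_{-1})=(d,d,d)$, the analysis of levels $0$ and $-1$ (as in \autoref{lem:nill_5.5} and the arguments around \autoref{prop:number_of_-e_i}) is what pins down the hexagon structure; your heuristic about $-v_i$ and $-v_i-v_j$ is in the right spirit but needs those lemmas to be made rigorous.
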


By now, smooth Fano polytopes with at least $3d-2$ vertices are completely classified \cite{Obro08,AJP}. In \cite[Conjecture 9]{AJP} it was conjectured that in order to classify smooth
Fano polytopes $P$ of dimension $d$ with $|\Vert{P}| = 3d-k$ it is enough to do this up to dimension $3k$. More precisely, if $d > 3k$, then $P$ should be a direct sum of
$P_6$ and a lower-dimensional smooth Fano polytope. The goal of this paper is to verify the qualitative part of this conjecture.

\begin{theorem}\label{thm:main}
  Let $P$ be a $d$-dimensional smooth Fano polytope with $3d-k$ vertices and $d \ge 15k^2+37k + 2$. Further let  $f(d,k) := \lfloor\frac{d-15k^2-37k}{2}\rfloor$. Then $P$ is lattice equivalent to
    \[
      Q \oplus P_6^{\oplus f(d,k)}
    \]
  where $Q$ is a smooth Fano polytope of dimension $d-2 f(d,k)$.
\end{theorem}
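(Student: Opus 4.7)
The plan is to reduce the theorem, by induction on $d$, to a single splitting statement: whenever $d \geq 15k^2+37k+2$ and $P$ has exactly $3d-k$ vertices, $P$ is lattice equivalent to $P_6 \oplus P'$ for some smooth Fano polytope $P'$. Granting this, $P'$ has dimension $d-2$ and $3d-k-6 = 3(d-2)-k$ vertices, so if $d-2 \geq 15k^2+37k+2$ the inductive hypothesis applies to $P'$, giving $P \cong Q \oplus P_6^{\oplus (f(d-2,k)+1)} = Q \oplus P_6^{\oplus f(d,k)}$ (one checks directly that $f(d,k) = f(d-2,k)+1$). The remaining base cases are $d \in \{15k^2+37k+2, 15k^2+37k+3\}$, which are exactly those with $f(d,k) = 1$. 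Thus the theorem reduces to producing a single $P_6$-summand.

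For this one-step splitting, I would first formulate a combinatorial/lattice criterion certifying that six vertices of the shape $\pm v_1, \pm v_2, \pm(v_1+v_2) \in \Vert(P)$ which span a primitive rank-$2$ sublattice $N_1 \subset N$ realize a direct summand $P_6$ of $P$. Such a criterion should demand that every facet of $P$ either contains none of the six vertices or meets them only in a way compatible with a facet of the hexagon, which rules out unwanted interactions between the hexagon and its complement. The lattice-splitting of $N$ into $N_1 \oplus (N/N_1)$ then follows from unimodularity of the hexagonal facets.

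To produce such a configuration when $d$ is large, I would argue by pigeonhole on antipodal pairs. Let $S = \SetOf{v \in \Vert(P)}{-v \in \Vert(P)}$ and $U = \Vert(P) \setminus S$. A Casagrande-style analysis should bound $|U|$ linearly in $k$, since each non-antipodal vertex contributes to the deficiency $k = 3d - |\Vert(P)|$; in particular the number of antipodal pairs in $S$ is of order $d$. Call a pair $\{v,-v\} \subset S$ \emph{bad} if it either (a) admits no completion to a hexagonal configuration $\pm v, \pm w, \pm(v+w)$ inside $\Vert(P)$, or (b) admits such completions but every one of them is obstructed from splitting by some other vertex or facet of $P$. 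The key numerical input is that each defect---each vertex in $U$, and each other bad pair---can spoil only $O(k)$ potential hexagons, giving an upper bound of order $k^2$ on the total number of bad pairs. For $d \geq 15k^2+37k+2$ the number of antipodal pairs exceeds this bound, so at least one good pair exists and the splitting criterion applies.

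The main obstacle is proving the linear-per-defect estimate used above: that a single misbehaving feature of $P$ (a non-antipodal vertex, or a misaligned antipodal pair) obstructs only $O(k)$ of the many candidate hexagonal configurations. This calls for a careful rigidity analysis of how vertices outside a rational $2$-plane interact with vertices inside it through shared facets, exploiting the fact that every facet of a smooth Fano polytope is a unimodular simplex and that vertex neighborhoods are therefore combinatorially tightly constrained. The explicit constants $15$ and $37$ should come out of this accounting, and any looseness here would merely worsen the bound in the hypothesis without affecting the qualitative splitting statement that verifies the conjecture of \cite{AJP}.
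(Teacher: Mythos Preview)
Your inductive reduction to a single $P_6$-splitting is correct and harmless (the paper in fact establishes all $f(d,k)$ splittings simultaneously, but one suffices for the induction). Your high-level outline---produce many candidate hexagons, bound the number of obstructed ones by $O(k^2)$, conclude by pigeonhole---is also the shape of the paper's argument. However, what you have written is not a proof: as you yourself say, ``the main obstacle is proving the linear-per-defect estimate,'' and you offer no mechanism for this beyond the hope that unimodularity will make it work. Everything nontrivial in the theorem lies inside that obstacle.

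The paper supplies precisely this missing mechanism, and it is where all the work happens. Rather than reasoning globally about antipodal pairs, the paper fixes a \emph{special facet} $F$ (one whose cone contains the vertex sum $s_P$) and uses the lattice basis $\Vert F$ to give every vertex of $P$ integer coordinates. Vertices are stratified by level $\langle u_F,\cdot\rangle$; the vertices of $F$ are partitioned into sets $A,B,C$ according to whether they are ``good'' (opposite vertex on level $0$) and whether the map $\phi(v)=v+\opp(F,v)$ lands back in $\Vert F$. The candidate hexagons are indexed not by arbitrary antipodal pairs but by a carefully refined subset $\bar A\subset A$ of vertices $v$ for which $v,\phi(v),\pm(v-\phi(v)),-v,-\phi(v)$ are all vertices and the pair contributes nothing to $s_P$. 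The splitting criterion becomes a coordinate condition: the hexagon on $\{v,\phi(v)\}$ splits off once every other vertex $x$ satisfies $\langle u_{F,v},x\rangle=\langle u_{F,\phi(v)},x\rangle=0$. The obstruction count is then carried out level by level: level-$0$ vertices contribute at most $|A|-|\bar A|+|B|$ bad $v$'s; vertices on level $\le -2$ contribute at most $(2k+2)\cdot 2k$ via a lemma (proved by walking through neighboring special facets) that any such vertex has at most $2k+2$ nonzero $\bar A$-coordinates; and level $-1$ vertices are handled by a trichotomy into three explicit types. None of these bounds follows from a soft rigidity principle---each uses the neighboring-facet relation $u_{F'}=u_F-u_{F,z}$ and the special-facet inequality $\langle u_F,s_P\rangle\ge 0$ in a specific way. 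Without the coordinate system furnished by the special facet, your global antipodal-pair framing gives you no handle on which completions $\pm v,\pm w,\pm(v+w)$ to consider or on how a single ``defect'' vertex interacts with them, so the $O(k)$-per-defect claim remains an assertion rather than an argument.
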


\begin{remark}  Any smooth Fano polytope is a \emph{reflexive polytope}, i.e., it contains the origin in its interior (as the only interior lattice point) and its dual polytope
is also a lattice polytope (we refer to \cite{Bat94,1067.14052}). Reflexive polytopes always appear as dual pairs. They correspond to Gorenstein toric Fano varieties and were introduced by Batyrev in 1994 to provide a combinatorial framework for constructing mirror symmetric pairs of Calabi-Yau hypersurfaces \cite{Bat94}. In fixed dimension only a finite number of reflexive polytopes exist up to lattice equivalence. Still, Haase and Melnikov showed that {\em any} lattice polytope is isomorphic to the face of some (possibly much higher-dimensional) reflexive polytope, \cite{HM06}. Interestingly, it is conjectured that $P_6^{d/2}$ has the maximal number of vertices of a reflexive polytope in even dimension; check \cite{B++}. Note that this is the dual reflexive polytope of $P_6^{\oplus d/2}$.
\end{remark}

\subsection{The algebro-geometric viewpoint: background, applications, and conjectures}
\label{sec:TORIC}
To make this paper accessible not only to combinatorialists but also to algebraic geometers, we decided to split the introduction. In this section we will use algebro-geometric language
to describe the relevance in toric geometry, to discuss the equality case, and to present more detailed conjectures (all toric geometry statements can easily be translated into combinatorial ones). However, note that all proofs in the following sections will be purely combinatorial.
\smallskip

While smooth Fano polytopes are interesting peculiar classes of lattice polytopes, the main reason for their investigation originates in algebraic geometry. Toric Fano manifolds are among the most intensively studied classes of toric varieties. We refer to the surveys \cite{Debarre03} and \cite{Fanosurvey} and the references therein. By the toric dictionary (e.g. \cite[Chapter 8]{CLSbook}), any $d$-dimensional smooth Fano polytope $P$ corresponds one-to-one to a $d$-dimensional \emph{toric Fano manifold} $X$. Two smooth Fano polytopes are lattice equivalent if and only if the corresponding toric Fano manifolds are isomorphic.
In each dimension $d$, there exists only a finite number of toric Fano $d$-folds up to isomorphisms. As described in the previous section, they are completely classified up to dimension $9$. Their large number in
higher dimensions motivates the study of subclasses of special interest.

Let $X$ be a toric Fano $d$-fold corresponding to a smooth Fano polytope $P$ of dimension $d$. Then the \emph{Picard number} $\rho_X$ of $X$ equals $|\Vert{P}|-d$.
For instance, $P_6$ corresponds to the del Pezzo surface $S_3$ ($\PP^2$ blown up in three torus-invariant points), note that $\rho_{S_3} = 4$.
A smooth Fano polytope $P$ splits into two smooth Fano polytopes if and only if $X$ is isomorphic to the product of the corresponding toric Fano manifolds.

\autoref{casa:theorem}, by Casagrande, implies that the Picard number $\rho_X$ of any toric Fano $d$-fold $X$ is bounded from above by $2d$.
Moreover, the equality case is only attained in even dimension $d$ by $(S_3)^{d/2}$, the product of $d/2$ copies of the del Pezzo surface $S_3$. In 2008, \Obro showed that if the Picard number equals $2d-1$, then the variety $X$ is isomorphic to $(S_3)^{(d-2)/2} \times S_2$  if $d$ is even, where $S_2$ is $\PP^2$ blown up in two torus-invariant points, and $X$ is
isomorphic to $(S_3)^{(d-3)/2} \times Y$ if $d$ is odd, where $Y$ is one of two toric Fano $3$-folds with $\rho_Y = 5$. Recently, a complete classification of toric Fano $d$-folds $X$ with $\rho_X = 2d-2$ was given by Assarf, Joswig, Paffenholz. Their result gave rise to the following conjecture:

\begin{conjecture}[{\cite[Conjecture 9]{AJP}}]\label{conjecture}
  Let $X$ be a toric Fano $d$-fold with $\rho_X = 2d-k$. If $d > 3k$, then $X$ is isomorphic to $(S_3)^m \times Y$,
  where $m$ is a positive integer and $Y$ is a toric Fano manifold of dimension at most $3k$.
\end{conjecture}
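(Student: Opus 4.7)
The plan is to push Casagrande's special-facet machinery from its role in the $3d$ bound (Theorem~\ref{casa:theorem}) into a structural statement that identifies many direct $P_6$-summands. Fix a special facet $F$ of $P$ with vertices $v_1,\ldots,v_d$ forming a lattice basis of $N$, and let $u_F\in\Hom(N,\ZZ)$ be the dual inner normal. Casagrande's layer analysis stratifies $\Vert P$ by the integer values of $\langle u_F,\cdot\rangle$, and in the extremal case $|\Vert P|=3d$ every basis vertex $v_i$ participates in a \emph{perfect hexagonal triple} $(v_i,w,-v_i-w)$ whose negatives also lie in $\Vert P$. When $|\Vert P|=3d-k$ the overall defect from this extremal picture is controlled by $k$, so at most $O(k)$ basis vertices can fail to lie in such a hexagonal triple.

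The first step is to extract a large set of \emph{candidate} hexagons. Using Casagrande's inequality together with the simpliciality and unimodularity of every facet cone, I would partition the basis $v_1,\ldots,v_d$ (up to at most linearly-many-in-$k$ exceptions) into pairs $(v_i,v_j)$ such that all six points $\pm v_i,\pm v_j,\pm(v_i+v_j)$ are vertices of $P$; each such pair determines a sub-polytope lattice-isomorphic to $P_6$ spanning the rank-$2$ sublattice $L_{ij}:=\ZZ v_i+\ZZ v_j$, which is automatically a direct summand of $N$ because the $v_i$ form a basis.

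The main obstacle, and the reason the bound is quadratic rather than linear in $k$, is \emph{interference}: an external vertex $w\in\Vert P$ lying outside the six hexagon points may have a nonzero $L_{ij}$-component along one of the coordinates $v_i$ or $v_j$, preventing the splitting $P \cong Q\oplus P_6$ even though $L_{ij}$ itself is a summand of $N$. I would build a bipartite interference graph $\mathcal{I}$ between the candidate hexagons and such vertices $w$, and prove two local charges:
\begin{enumerate}
  \item each interfering vertex $w$ can be charged against the vertex deficit, so there are only $O(k)$ such $w$;
  \item each $w$ can obstruct at most $O(k)$ candidate hexagons, via a bound on the support of $w$ relative to $F$ that again uses reflexivity and the smoothness of facets through $w$.
\end{enumerate}
A double count then produces at most $O(k^2)$ obstructed hexagons, and tracking the explicit constants through the two local estimates yields precisely the threshold $15k^2+37k$.

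Finally, the remaining $f(d,k)=\lfloor(d-15k^2-37k)/2\rfloor$ unobstructed hexagons have pairwise disjoint spanning sublattices $L_{ij}$, whose direct sum is a rank-$2f(d,k)$ summand of $N$ whose complement contains all other vertices of $P$. This simultaneously produces the direct-sum decomposition $P\cong Q\oplus P_6^{\oplus f(d,k)}$, and $Q$ is automatically a smooth Fano polytope of the correct dimension $d-2f(d,k)$ by \autoref{free-sum-def}. The argument is \emph{not} inductive on $d$: all $f(d,k)$ splittings are produced in one pass from the analysis of the single special facet $F$, which is why the residual dimension of $Q$ is the fixed quantity $15k^2+37k$ (or one more) rather than shrinking through an induction.
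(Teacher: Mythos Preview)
First, a framing issue: the statement you were asked to prove is \autoref{conjecture}, which the paper does \emph{not} prove --- it is explicitly left open (see the paragraph after \autoref{main-alg-geo}). Your write-up does not prove it either: you end at the threshold $15k^2+37k$ and the function $f(d,k)$, which is exactly the content of \autoref{thm:main}/\autoref{main-alg-geo}, not of \autoref{conjecture}. So at best you are sketching the paper's main theorem, not the conjecture, and you should say so.

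Comparing your sketch to the paper's actual proof of \autoref{thm:main}: the global architecture is the same --- fix a special facet, find many candidate hexagons among its basis vertices, then remove the ones that interact with outside vertices --- but two of your load-bearing claims do not hold as stated. Your item~(i), that there are only $O(k)$ interfering vertices $w$, is false: vertices on levels $0$ and $-1$ number $\Theta(d)$, and on level $-1$ the interfering ones (types~\eqref{prop:number_of_-e_i:i} and~\eqref{prop:number_of_-e_i:ii} in \autoref{prop:number_of_-e_i}) can already be $O(k^2)$ in number, not $O(k)$. The paper does \emph{not} bound the number of interfering $w$'s globally; it treats levels $0$, $-1$, and $\le -2$ separately, and on level $-1$ it counts incidences $(v,x)$ directly rather than bounding the two sides of your bipartite graph independently (see the argument producing \eqref{final-eq4}). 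If you run your na\"ive double count with the correct cardinalities you get $O(k^3)$, not $O(k^2)$. Second, your candidate hexagons are wrong: the six vertices are $\pm v_i,\pm v_j,\pm(v_i-v_j)$ (compare \autoref{hexagon-fig-phi} and the definition of $W$ in \S\ref{proof-of-theo}), not $\pm(v_i+v_j)$; the point $v_i+v_j$ lies on level $2$ and is never a vertex of $P$.

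So your outline is in the right spirit but is not a proof: the level-by-level analysis (\autoref{card}, \autoref{prop:number_of_-e_i}, \autoref{prop:chuck_norris_times_2_vertices}) and the refined incidence count for type~\eqref{prop:number_of_-e_i:ii} vertices are precisely where the work is, and your two ``local charges'' gloss over them in a way that would give the wrong exponent if carried out literally.
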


Note that necessarily here $\rho_Y = 2 \dim(Y) - k$, since $\rho_{S_3}=4$. As will be explained below, this conjecture would be best possible. It would generalize the mentioned classification results, moreover, its validity would reduce the classification of toric Fano $d$-folds with $\rho_X = 2d-3$ to the case of $d \le 9$, where complete classifications already exist.

The main result of this paper is the verification of the following weaker version of \autoref{conjecture}.

\begin{theorem}\label{main-alg-geo}
  Let $X$ be a toric Fano $d$-fold with $\rho_X = 2d-k$. If $d > 15k^2+37k + 1$, then $X$ is isomorphic to $(S_3)^m \times Y$,
  where $m$ is a positive integer and $Y$ is a toric Fano manifold of dimension at most $15k^2+37k + 1$.
\end{theorem}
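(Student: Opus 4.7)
The plan is to deduce \autoref{main-alg-geo} directly from the combinatorial statement \autoref{thm:main} via the toric dictionary sketched in \autoref{sec:TORIC}. First I would set up the correspondence: let $P \subset N_\RR$ be the smooth Fano polytope associated to $X$, so that $\dim P = d$ and $|\Vert P| = \rho_X + d = 3d-k$. The hypothesis $d > 15k^2 + 37k + 1$ is equivalent to $d \ge 15k^2 + 37k + 2$, so $P$ satisfies the hypotheses of \autoref{thm:main}.

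Next I would invoke \autoref{thm:main} to obtain a lattice equivalence
\[
  P \ \cong\ Q \oplus P_6^{\oplus f(d,k)}, \qquad f(d,k) = \left\lfloor \tfrac{d - 15k^2 - 37k}{2} \right\rfloor,
\]
with $Q$ a smooth Fano polytope of dimension $d - 2f(d,k)$. Translating back through the toric dictionary, the direct sum of smooth Fano polytopes corresponds to the product of toric Fano manifolds, and $P_6$ corresponds to the del Pezzo surface $S_3$. Hence, setting $m := f(d,k)$ and letting $Y$ be the toric Fano manifold associated to $Q$, I get
\[
  X \ \cong\ Y \times (S_3)^{m}.
\]

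To finish I would verify that $m$ and $\dim Y$ meet the claim. Since $d - 15k^2 - 37k \ge 2$, we have $m \ge 1$, so $(S_3)^m$ really appears. For the dimension of $Y$, the inequality $f(d,k) \ge \tfrac{d - 15k^2 - 37k - 1}{2}$ gives
\[
  \dim Y \ =\ d - 2f(d,k) \ \le\ 15k^2 + 37k + 1,
\]
which is the desired bound. Finally, because $\rho_{S_3} = 4 = 2\dim S_3$, the relation $\rho_X = 2d - k$ forces $\rho_Y = 2\dim Y - k$ automatically.

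The only real obstacle in this argument is \autoref{thm:main} itself; once that combinatorial statement is established, the algebro-geometric theorem follows by the routine translation above. So the heart of the paper, and where I would expect all the technical difficulty to lie, is proving the splitting result for smooth Fano polytopes with $3d-k$ vertices in the polytopal language, rather than in this final dictionary step.
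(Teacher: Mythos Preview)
Your proposal is correct and matches the paper's approach exactly: the paper simply states that \autoref{main-alg-geo} ``is equivalent to \autoref{thm:main}'' and defers all the work to the combinatorial proof, which is precisely the translation via the toric dictionary you spell out. Your verification that $m \ge 1$ and $\dim Y \le 15k^2+37k+1$ is a routine unpacking of $f(d,k)$ that the paper leaves implicit.
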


This result is equivalent to \autoref{thm:main}.
Let us remark that it seems to be a hard open problem to improve the quadratic bound in \autoref{main-alg-geo} to a linear bound (as expected by \autoref{conjecture}). See also \autoref{remark:bottle-neck}.

\begin{remark}
  Casagrande's upper bound of $2d$ on the Picard number was proven in more generality for $\QQ$-factorial Gorenstein toric Fano varieties \cite{Casagrande06} (corresponding to simplicial reflexive polytopes). In \cite{Nill-Obro} \Obro and the second author completely classified all
  such varieties with $\rho_X =2d-1$. These results suggest that \autoref{conjecture} (and \autoref{main-alg-geo}) should also hold when $X$ and $Y$ are $\QQ$-factorial Gorenstein toric Fano varieties.
\end{remark}

We say a toric Fano manifold $X$ is \emph{non-$S_3$-splittable}, if it is not a product of $S_3$ and a lower-dimensional toric Fano manifold.
Since there are only finitely many toric Fano $d$-folds of dimension $d \le 15k^2+37k + 1$, we get the following consequence from \autoref{main-alg-geo}.

\begin{corollary}
  For each $k$ there is only a finite number of isomorphism classes of non-$S_3$-splittable toric Fano $d$-folds $X$ with $\rho_X \ge 2d-k$.
\end{corollary}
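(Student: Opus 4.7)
The plan is to derive the corollary directly from \autoref{main-alg-geo} by a simple monotonicity argument. Given a non-$S_3$-splittable toric Fano $d$-fold $X$ with $\rho_X \ge 2d-k$, I would set $k' := 2d - \rho_X$, so that $\rho_X = 2d - k'$ exactly. Casagrande's bound $\rho_X \le 2d$ (the algebro-geometric restatement of \autoref{casa:theorem}) guarantees $k' \ge 0$, and the hypothesis $\rho_X \ge 2d - k$ gives $k' \le k$.

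Next, I would apply \autoref{main-alg-geo} with parameter $k'$ in place of $k$. If $d > 15(k')^2 + 37 k' + 1$ held, that theorem would force $X \cong (S_3)^m \times Y$ for some positive integer $m$, contradicting the standing assumption that $X$ is non-$S_3$-splittable. Therefore
\[
  d \;\le\; 15(k')^2 + 37 k' + 1 \;\le\; 15 k^2 + 37 k + 1,
\]
where the second inequality uses $0 \le k' \le k$ together with the monotonicity of the function $x \mapsto 15 x^2 + 37 x + 1$ on $[0,\infty)$.

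To conclude, I invoke the fact (recalled in the introduction and standard in toric geometry) that in each fixed dimension there are only finitely many isomorphism classes of toric Fano manifolds. Since every non-$S_3$-splittable $X$ with $\rho_X \ge 2d - k$ has dimension bounded by the explicit constant $15k^2 + 37 k + 1$, and this bound is uniform in $d$, only finitely many such $X$ can exist. The argument is entirely formal given \autoref{main-alg-geo}; the only real bookkeeping is the translation between $\rho_X \ge 2d-k$ and the exact value $k' = 2d - \rho_X$ needed to apply the theorem, and the observation that the bound $15 k^2 + 37 k + 1$ is monotone in $k$, so there is no genuine obstacle.
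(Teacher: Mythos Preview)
Your argument is correct and is essentially the paper's own proof, just written out in more detail: the paper derives the corollary in one line from \autoref{main-alg-geo} together with the finiteness of toric Fano manifolds in each fixed dimension, leaving implicit the passage from $\rho_X \ge 2d-k$ to the exact parameter $k' = 2d-\rho_X \le k$ that you spell out.
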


Let us illustrate the previous result by considering the number of isomorphism classes of non-$S_3$-splittable toric Fano manifolds for the known cases $k=0,1,2$. For $k=0$ we have just $1$ such manifold (namely a point),
for $k=1$ there are $3$ (one in each dimension $1$, $2$, and $3$), and for $k=2$ there are $15$ isomorphism classes where the highest dimension is $6$ (the numbers are $2,4,7,1,1$ from dimension $2$ up to dimension $6$ respectively).

\smallskip

Regarding the sharpness of \autoref{conjecture}, it is natural to reformulate and extend it in the following way (for an explicit description of the extreme case see \hyperref[sec:extreme]{Section~\ref*{sec:extreme})}.

\begin{conjecture}\label{conjecture1'}
  Let $X$ be a non-$S_3$-splittable toric Fano $d$-fold. Then $\rho_X \le \frac{5d}{3}$, with equality if and only if $d = 3 d'$ (for $d'$ a positive integer) and
  $X$ is isomorphic to the $d'$-fold product of a uniquely determined toric $S_3$-bundle over $\PP^1$.
\end{conjecture}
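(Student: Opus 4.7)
The plan is to pass to the polytope side, reduce the inequality to a linear sharpening of \autoref{thm:main}, and then analyze the extremal configuration combinatorially. Let $P$ be the smooth Fano $d$-polytope corresponding to $X$; non-$S_3$-splittability of $X$ translates to $P$ being non-$P_6$-splittable, and $\rho_X = |\Vert(P)| - d$. Writing $|\Vert(P)| = 3d-k$, the bound $\rho_X \le \frac{5d}{3}$ rearranges to $d \le 3k$. On the polytope side the conjectured inequality is therefore exactly the qualitative content of \autoref{conjecture}, and the equality case becomes: if $d = 3k$, $|\Vert(P)|=8k$, and $P$ is non-$P_6$-splittable, then $P$ is lattice equivalent to $P_0^{\oplus k}$ for a unique smooth Fano $3$-polytope $P_0$ with $8$ vertices, the one corresponding to the distinguished toric $S_3$-bundle over $\PP^1$.

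The first step is to strengthen \autoref{thm:main} from the quadratic bound $d \ge 15k^2+37k+2$ to the linear bound $d > 3k$. The argument behind \autoref{thm:main} presumably locates ``hexagonal'' subconfigurations via facet-local arguments and iterates them to split off copies of $P_6$; to reach a linear bound one needs a genuinely global pigeonhole. A natural route is to define on $\Vert(P)$ an equivalence relation whose classes are candidate $P_6$-summands, show that once sufficiently many classes of a fixed type coexist they are forced to sit in mutually orthogonal sublattices and hence yield a genuine direct-sum decomposition, and then count vertices inside versus outside such classes so that the deficit $k = 3d - |\Vert(P)|$ controls the number of non-splitting vertices linearly. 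Once a linear split-off theorem of this form is available, $\rho_X \le \frac{5d}{3}$ follows immediately by rearrangement.

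For the equality case $d = 3k$, one must show that every vertex participates in a $3$-dimensional summand. Saturation of the linear bound should force every non-$P_6$-split vertex to lie inside a primitive $3$-dimensional sublattice carrying an $8$-vertex smooth Fano polytope, and these sublattices must be pairwise orthogonal, yielding $P \cong P_0^{\oplus k}$. To pin down $P_0$ uniquely, use the low-dimensional classifications referenced in the introduction: the non-$P_6$-splittable smooth Fano $3$-polytopes with $8$ vertices form a short finite list, and among them exactly one admits a lattice projection onto $[-1,1]$ whose fiber polytope is lattice-equivalent to $P_6$, namely the polytope of the distinguished $S_3$-bundle over $\PP^1$.

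The main obstacle is clearly the first step. The paper explicitly flags the quadratic-to-linear improvement as hard open (see \autoref{remark:bottle-neck}), and the facet-by-facet techniques behind \autoref{thm:main} do not seem to yield it. A successful attack likely requires a genuinely new global invariant---for instance, a way to bound the number of ``non-hexagonal'' vertices purely in terms of $k$, independently of $d$---that is beyond the vertex-star combinatorics currently available. Consequently, feeding only the qualitative finiteness from \autoref{main-alg-geo} back into the argument does not come close to the sharp bound $\rho_X \le \tfrac{5d}{3}$; the sharp inequality really rests on the linear improvement, and the uniqueness in the equality case then reduces to a tractable classification problem in dimension three.
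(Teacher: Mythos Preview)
The statement you are attempting to prove is \autoref{conjecture1'}, which is stated in the paper as an open \emph{conjecture}, not as a theorem. There is no proof in the paper to compare against: the paper only establishes the weaker qualitative \autoref{thm:main} with a quadratic bound, and in \hyperref[sec:extreme]{Section~\ref*{sec:extreme}} exhibits the example $B_1^{\oplus d'}$ showing that the conjectured bound $\rho_X \le \tfrac{5d}{3}$ would be sharp. Your reduction of the inequality to the statement ``$P$ non-$P_6$-splittable implies $d \le 3k$'' is correct and is precisely the content of \autoref{conjecture}, which the paper also leaves open.

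Your proposal is therefore not a proof but an accurate diagnosis of why the conjecture is hard. You yourself note that the crucial first step---improving the quadratic bound of \autoref{thm:main} to a linear one---is flagged in \autoref{remark:bottle-neck} as the main bottleneck, and you offer no concrete mechanism (only a wish for a ``global pigeonhole'' or ``new global invariant'') to overcome it. Likewise, your treatment of the equality case is programmatic: the assertions that saturation forces a decomposition into pairwise orthogonal $3$-dimensional pieces, and that exactly one $8$-vertex smooth Fano $3$-polytope has the required bundle structure, are plausible but not argued. In short, your write-up correctly identifies the logical architecture a proof would need and honestly locates the gap, but it does not close that gap; this is consistent with the paper, which presents the statement as a conjecture rather than a result.
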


\begin{remark}\label{alg-geo-remark}
  In the non-toric setting much less is known about the structure of Fano manifolds with maximal Picard number. It is known that Fano $3$-folds have Picard number at most 10 with equality only for the product of a Del Pezzo surface with $\PP^1$. One conjecture in higher dimensions is that in even dimension $d$ the maximal Picard number equals $\frac{9d}{2}$ and is attained only for products of del Pezzo surfaces with Picard number $9$. There are further results \cite{prime-divisor,dim-four} that indicate that also in the non-toric setting extremizing the Picard number naturally leads to fibrations of Del Pezzo surfaces over another Fano manifold.
\end{remark}

Let us finish this introduction by proposing the following further refinement of Casagrande's upper bound \cite{Casagrande06}, based upon classification results in low dimensions. We say a toric Fano manifold $X$ is \emph{non-splittable} if it is not a product of
two lower-dimensional toric Fano manifolds.

\begin{conjecture}\label{conjecture2}
  Let $X$ be a non-splittable toric Fano $d$-fold, where $d \ge 3$. Then $\rho_X \le \frac{4d+3}{3}$, with equality if and only if $d = 3 d'$ (for $d'$ a positive integer) and
  $X$ is isomorphic to a uniquely determined toric $(S_3)^{2d'}$-bundle over $\PP^{d'}$.
\end{conjecture}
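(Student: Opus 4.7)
The plan is to work combinatorially: setting $|\Vert P| = 3d - k$, the conjectured inequality $\rho_X \le (4d+3)/3$ translates into $k \ge (2d-3)/3$, so the task is to show that a non-splittable smooth Fano polytope of dimension $d \ge 3$ has at most $(7d+3)/3$ vertices, together with a uniqueness statement for the extremal polytopes in dimensions divisible by $3$. I would organise the argument as a reduction to a non-splittable ``core'' followed by a sharp structural analysis of that core.

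First I would invoke \autoref{thm:main} to get a coarse preliminary bound. Non-splittability forbids any $P_6$ direct summand, so if $d > 15k^2 + 37k + 1$, the conclusion of \autoref{thm:main} forces $f(d,k) \ge 1$ copies of $P_6$ to split off, contradicting non-splittability. Hence $d \le 15k^2 + 37k + 1$, which yields only $k = \Omega(\sqrt{d})$; this is well short of the linear deficit $k \ge (2d-3)/3$ required by the conjecture.

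To close this gap I would try to refine the combinatorial argument behind \autoref{thm:main} so that the ``cost'' of non-splittability is paid vertex-by-vertex. After splitting off a maximal family of $P_6$-summands from a polytope with $3d-k$ vertices, the residue is a non-splittable smooth Fano polytope whose dimension one wants to control linearly in $k$. The natural local unit is a pair of opposite vertices $\{v,-v\}$ together with its facet star; the case analysis in \cite{Casagrande06} shows that such a pair either completes a $P_6$-summand (ruled out here) or detects a $\PP^1$-bundle direction, in which case a careful examination of the unimodular facet cones around $v$ should contribute a controlled deficit to $k$. Combined with an averaging argument over all opposite pairs, and a primitive-collection analysis to detect the $\PP^{d'}$-base in the extremal configuration, this ought to give the desired linear bound.

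The equality case would be extracted by tracking the equality conditions through every step of this induction: each opposite pair that fails to produce a $P_6$-summand must instead furnish one of the $d'$ factors of the projectivised bundle, and the residual ``fibre part'' of the polytope must be rigid enough that smoothness and reflexivity uniquely force it into the stated $S_3$-type shape. The main obstacle is exactly the quadratic-to-linear upgrade on the non-splittable residue: it seems to require genuinely new combinatorial input on how unimodular facet cones can fit together without splitting, going beyond the averaging estimate driving \autoref{thm:main}. I expect the sharp uniqueness in the equality case to follow by a rigidity argument once the correct invariants are being saturated, but the linear bound itself looks like the genuinely difficult heart of the conjecture.
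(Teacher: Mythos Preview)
The statement you are trying to prove is \emph{Conjecture}~\ref{conjecture2}: the paper does not prove it. What the paper actually does for this statement is (a) verify it computationally for $d\le 9$ using the existing classification data, and (b) construct the extremal polytope $B_{d'}$ in Section~\ref{sec:extreme} to show that the bound $\rho_X \le \frac{4d+3}{3}$ would be sharp and that the proposed equality candidate exists. There is no proof in the paper to compare your proposal against.

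Your proposal is honest about this: you correctly observe that \autoref{thm:main} only gives $k=\Omega(\sqrt{d})$, far from the linear deficit the conjecture requires, and you identify the needed step as upgrading the quadratic bound to a linear one. But the sketch for that upgrade---the ``vertex-by-vertex cost'' argument via opposite pairs and primitive collections---is not a proof; it is a hope for where new ideas might come from. The paper itself flags exactly this obstruction in Remark~\ref{remark:bottle-neck}: the quadratic order arises from separately applying worst-case estimates such as \autoref{prop:chuck_norris_times_2_vertices}, and the authors state that a unified approach to those estimates would be needed to reach a linear bound. Your proposal does not supply that unified approach, and Casagrande's analysis of a single pair $\{v,-v\}$ does not by itself control how many such pairs can coexist without forcing a splitting. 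So the genuine gap is precisely the one you name at the end: the linear bound is the open heart of the conjecture, and nothing in your outline closes it.
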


As mentioned above smooth Fano polytopes are already classified up to dimension~$9$. The conjectures are true for $d\le 9$; see \hyperref[sec:extreme]{Section~\ref*{sec:extreme}}.

\subsection{Organization of the paper}

This paper is organized as follows. \hyperref[sec:last]{Section~\ref*{sec:last}} contains the combinatorial proof of \autoref{main-alg-geo}. \hyperref[sec:extreme]{Section~\ref*{sec:extreme}} discusses the equality cases in \autoref{conjecture1'} and \autoref{conjecture2}.

\section{Proof of the main theorem}
\label{sec:last}

\subsection{Preliminaries of the proof}

Let $P$ be a $d$-dimensional smooth Fano polytope and $F$ a facet of $P$. Let us define some important notions and fix the notation (we refer to \cite{OebroPhD,1067.14052}).

For every vertex $v \in F$ there exists a unique \emph{neighboring facet} $\neigh(F,v)$ that intersects with $F$ in a $(d-2)$-dimensional face that does not contain $v$. The unique vertex of $\neigh(F,v)$ that is not contained in $F$ is called the \emph{opposite vertex} of $v$ with respect to $F$ and will be denoted by $\opp(F,v)$; see \autoref{fig:opposite_vertex}.

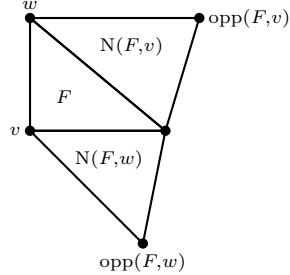
\begin{figure}[ht]
  \centering
  \begin{tikzpicture}[scale=1.5]
    \tikzstyle{edge} = [draw,thick,-,black]

    \coordinate[label=left:$\scriptstyle v$] (v) at (0,0);
    \coordinate (z) at (1.2,0);
    \coordinate[label=above:$\scriptstyle w$] (w) at (0,1);
    \coordinate[label={right:$\scriptstyle \opp(F,v)$}] (ov) at (1.5,1);
    \coordinate[label={below:$\scriptstyle \opp(F,w)$}] (ow) at (1,-1);

    \draw[edge] (v) -- (w) -- (z) -- (v) -- cycle;
    \draw[edge] (ov) -- (w) -- (z) -- (ov) -- cycle;
    \draw[edge] (v) -- (ow) -- (z) -- (v) --cycle;

    \foreach \point in {v,z,w,ov,ow}
      \fill[black] (\point) circle (1.3pt);

    \draw (.3,.3) node {$\scriptstyle F$};
    \draw (.9,.75) node {$\scriptstyle \neigh(F,v)$};
    \draw (.7,-.25) node {$\scriptstyle \neigh(F,w)$};
  \end{tikzpicture}
  \caption{\label{fig:opposite_vertex} Neighboring facets and opposite vertices}
\end{figure}

We define the \emph{dual lattice} of $N$ as $M:=\Hom_\ZZ(N,\ZZ)\cong \ZZ^d$ with the pairing $\pro{\cdot}{\cdot} \,:\, M \times N \to \ZZ$. For $\Vert{F} = \{v_1, \ldots, v_d\}$, a lattice basis of $N$, we get the dual lattice basis
$u_{F,v_1}, \ldots, u_{F,v_d}$ of $M$. In other words, for $v \in \Vert{F}$ the lattice point $u_{F,v} \in M$ is characterized by $\pro{u_{F,v}}{v} = 1$ and $\pro{u_{F,v}}{w}=0$ for $w \in \Vert{F}\setminus\{v\}$. Moreover, it holds that $u_F := \sum_{i=1}^d u_{F,v_i} \in M$ is the unique primitive \emph{outer normal} of $F$, i.e., for every $x \in \Vert{F}$ we have $\pro{u_F}{x} = 1$ and for every $y \in P$ we have $\pro{u_F}{y} \le 1$.

Let $j \in \ZZ$. We define the set of vertices of $P$ of \emph{level} $j$ as
  \[
    V(F,j) := \{v \in \Vert{P} \;:\; \pro{u_F}{v} = j\}\,.
  \]
Moreover, we set $\eta^F_j := |V(F,j)|$.
Note that $\eta^F_1 = d$ as $P$ is smooth and therefore simplicial.
The vector $\eta^F := (\eta^F_1, \eta^F_0, \eta^F_{-1}, \ldots)$ is called the \emph{$\eta^F$-vector} of $P$.

An important role is played by the \emph{vertex sum} $s_P$ of $P$:
  \[
    s_P := \sum_{v\in \Vert P} v\,.
  \]
We say that $F$ is a \emph{special facet} \cite{Obro08}, if $s_P$ is in the cone spanned by $F$. Note that in this case necessarily $\pro{u_F}{s_P} \ge 0$. Special facets were the crucial
tool in \Obro's classification algorithm for smooth Fano polytopes \cite{Obro-alg}.

\subsection{General assumptions of the proof}
By a suitable lattice equivalence we can assume that we are from now on in the following situation:

\begin{itemize}
  \item $P$ is a $d$-dimensional smooth Fano polytope,

  \item $|\Vert P| = 3d-k$ where $d \ge 3$ and $k \ge 3$ (for the case $k\le 2$ see \cite{Casagrande06,Obro-alg,AJP}),

  \item $F$ denotes a special facet which has outer normal vector $u_F$,

  \item we write $\eta_j := \eta^F_j$ for the number of vertices of $F$ on level $j$. We also use the notation $\eta_{\le j} := \sum_{i \le j} \eta_i$.
\end{itemize}

\subsection{Distribution of the vertices}
Let us start by recalling some basic properties of vertices on level $0$:

\begin{lemma}[{Nill~\cite[Lem.~5.5]{1067.14052}}]\label{lem:nill_5.5}
  Let $x \in V(F,0)$. Then there exists a vertex $v \in \Vert F$ such that $x = \opp(F,v)$. Moreover, for $v \in \Vert F$ we have
  \[
    x = \opp(F,v) \quad \iff \quad \langle u_{F,v},x \rangle < 0 \quad \iff \quad \langle u_{F,v},x \rangle = -1.
  \]
\end{lemma}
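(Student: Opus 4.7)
The plan is to use the lattice basis $\Vert F = \{v_1, \ldots, v_d\}$ together with the geometry of the neighboring facet $\neigh(F,v)$ to pin down the coordinates of $x$ relative to $F$.

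First, I would expand $x$ in the basis $\Vert F$: since $\{u_{F,v_i}\}$ is the dual basis, $x = \sum_i \langle u_{F,v_i}, x\rangle v_i$ with integer coefficients. The hypothesis $\langle u_F, x\rangle = 0$ reads $\sum_i \langle u_{F,v_i}, x\rangle = 0$, and because $0$ is the unique interior lattice point, $x \ne 0$, so not all coefficients vanish. Hence at least one coefficient is strictly negative: pick $v \in \Vert F$ with $c := \langle u_{F,v}, x\rangle < 0$.

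Next, I would introduce the primitive outer normal $u'$ of $F' := \neigh(F,v)$. Since every $v_j \in \Vert F \setminus \{v\}$ lies in $F \cap F'$, the functional $u' - u_F$ vanishes on $d-1$ linearly independent lattice vectors, so $u' = u_F + \mu\, u_{F,v}$ for a unique $\mu \in \ZZ$. Evaluating at $v$ gives $\langle u', v\rangle = 1 + \mu$, and since $v \in P \setminus F'$ the inequality $\langle u', v\rangle \le 1$ is strict, so $\mu < 0$. Now evaluate at $x$: $\langle u', x\rangle = \langle u_F, x\rangle + \mu \langle u_{F,v}, x\rangle = \mu c$. Both $\mu$ and $c$ are negative integers, so $\mu c \ge 1$; meanwhile $x \in P$ forces $\mu c = \langle u', x\rangle \le 1$. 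Hence $\mu c = 1$, which forces $\mu = c = -1$.

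In particular $\langle u_{F,v}, x\rangle = -1$, and the equality $\langle u', x\rangle = 1$ places $x$ on the facet $F'$, so $x$ is a vertex of $F'$. Because $\langle u_{F,v}, v_j\rangle = 0 \ne -1$ for every $v_j \in \Vert F \setminus \{v\}$, the only possibility is $x = \opp(F,v)$, which yields existence together with both forward implications of the ``moreover'' part. The remaining direction $x = \opp(F,v) \Rightarrow \langle u_{F,v}, x\rangle < 0$ follows from the same analysis applied to $y := \opp(F,v)$ in place of $x$: writing $a := \langle u_{F,v}, y\rangle$ and $b := \langle u_F, y\rangle$, one has $\langle u', y\rangle = b + \mu a = 1$; the bound $b \le 0$ (since $y \notin F$ rules out $b = 1$) together with $\mu < 0$ then forces $a < 0$.

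I expect the main obstacle to be identifying the correct parametrization of the tilt $u' - u_F$ as an integer multiple of a single dual basis vector; once the ``slope'' $\mu$ is available, integrality and the defining inequalities of $P$ combine mechanically to force $\mu c = 1$, and the identification of $x$ with $\opp(F,v)$ is then read off from the vertex set of $F'$.
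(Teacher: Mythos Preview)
Your proof is correct. The paper does not give its own proof of this lemma; it cites Nill and only remarks afterwards that the second equivalence follows from smoothness, pointing to the later \autoref{minus} and implicitly to \Obro's \autoref{lem:1+2}. Your argument is precisely the standard one underlying those two cited lemmas, specialized to level~$0$: the identity $u' = u_F + \mu\,u_{F,v}$ with $\mu\in\ZZ_{<0}$ is exactly \autoref{lem:1+2}, and the product trick $\mu c = \langle u',x\rangle \in [1,1]$ simultaneously forces $\mu=c=-1$, giving both $x\in F'$ (hence $x=\opp(F,v)$) and the value $-1$. Your final paragraph recovers \autoref{minus} directly. One cosmetic point: to get $x\ne 0$ you only need that $0$ is interior while $x$ is a vertex; invoking uniqueness of the interior lattice point is unnecessary.
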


Here the second equivalence follows from the smoothness of $P$ (for a more general statement see \autoref{minus}, where the opposite vertex is not necessarily assumed
to be on level zero).

\begin{lemma}\label{card}
  The $\eta$-vector of any special facet of $P$ has the following properties:
    \begin{align*}
      \eta_1 &= d, \\
      d-k \le \eta_0 &\le d, \\
      d-2k \le \eta_{-1} &\le d, \\
      \eta_{\le -2} &\le 2k, \\
      \eta_j &= 0 \text{ for } j < -k-1.
    \end{align*}
\end{lemma}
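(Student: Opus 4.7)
\medskip

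\noindent\textbf{Proof plan.} The proof should use just two inputs beyond the definitions: Nill's \autoref{lem:nill_5.5}, which bounds level-$0$ vertices by the $d$ vertices of $F$; and the fact that $F$ is a \emph{special} facet, which says $\langle u_F, s_P\rangle \ge 0$. Translating the latter through $\langle u_F,s_P\rangle = \sum_j j\,\eta_j$, and using $\eta_1 = d$ (which is immediate, since $u_F$ attains its maximum $1$ on $P$ exactly on $\Vert F$), gives the single key inequality
\[
   \sum_{j \le -1} |j|\,\eta_j \;\le\; d. \tag{$\ast$}
\]
In parallel, counting all $3d-k$ vertices of $P$ by their levels and using $\eta_1 = d$ gives
\[
   \eta_0 + \eta_{\le -1} \;=\; 2d-k. \tag{$\dagger$}
\]

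I would then read off the bounds in order. First, since each $|j|\ge 1$ for $j\le -1$, inequality ($\ast$) immediately yields $\eta_{\le -1} \le d$, hence $\eta_0 \ge d-k$ from ($\dagger$); together with $\eta_0\le d$ from \autoref{lem:nill_5.5} this gives the second line. Also $\eta_{-1}\le \sum_{j\le -1}|j|\eta_j \le d$. The crucial step is then to subtract ($\dagger$) into ($\ast$): using $\eta_{\le -1} = 2d-k-\eta_0$,
\[
  \sum_{j \le -2}(|j|-1)\,\eta_j \;=\; \sum_{j\le -1}|j|\,\eta_j \;-\; \eta_{\le -1} \;\le\; d - (2d-k-\eta_0) \;=\; \eta_0 - d + k \;\le\; k,
\]
where the last inequality again uses $\eta_0\le d$. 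Since $|j|-1\ge 1$ for $j\le -2$, this sharpened inequality bounds $\eta_{\le -2}\le k\le 2k$, which in turn combined with $\eta_{\le -1}\ge d-k$ from ($\dagger$) gives $\eta_{-1} = \eta_{\le -1}-\eta_{\le -2}\ge d-2k$.

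Finally, the same sharpened inequality handles the support statement: if some $\eta_j \ge 1$ with $j\le -k-2$, then the single term $(|j|-1)\eta_j$ alone is $\ge k+1$, contradicting the bound $\le k$. Hence $\eta_j = 0$ whenever $j < -k-1$.

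\medskip

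\noindent\textbf{Where the care is needed.} Essentially every claim is squeezed out of ($\ast$) and ($\dagger$); the only nontrivial geometric input is the upper bound $\eta_0\le d$ from \autoref{lem:nill_5.5}. The one nonobvious move is subtracting ($\dagger$) from ($\ast$) rather than working with them separately, which replaces the weight $|j|$ by $|j|-1$ and is what simultaneously produces the bounds $\eta_{\le -2}\le k$ and the vanishing of $\eta_j$ for $j\le -k-2$. Everything else is bookkeeping.
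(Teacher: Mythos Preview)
Your proof is correct and uses the same three ingredients as the paper: $\eta_1=d$ from simpliciality, $\eta_0\le d$ from \autoref{lem:nill_5.5}, and the special-facet inequality $\sum_{j\le -1}|j|\,\eta_j\le d$ combined with the vertex count $\eta_0+\eta_{\le -1}=2d-k$.

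The organization differs in one genuinely useful way. The paper derives $\eta_{-1}\ge d-2k$ first (by a direct chain of inequalities) and then deduces $\eta_{\le -2}\le 2k$ from the three lower bounds; for the vanishing statement it runs a separate argument picking the minimal occupied level. Your subtraction of $(\dagger)$ from $(\ast)$ instead produces the single weighted bound $\sum_{j\le -2}(|j|-1)\eta_j\le \eta_0-d+k\le k$, which is strictly sharper than what the lemma asserts (it gives $\eta_{\le -2}\le k$, not just $\le 2k$) and from which both $\eta_{-1}\ge d-2k$ and the vanishing of $\eta_j$ for $j<-k-1$ drop out immediately. So the approaches are the same in spirit, but your rearrangement is cleaner and yields a stronger intermediate bound.
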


\begin{proof}
  The first equation is trivial, because $P$ is a smooth Fano polytope and therefore simplicial.
  The upper bound on $\eta_0$ follows from \autoref{lem:nill_5.5}, since there are at most $d$ opposite vertices to the $d$ vertices in a given facet. The upper bound for $\eta_{-1}$ comes from the fact that we are looking at a special facet $F$ and therefore we have
    \[
      0 \le \langle u_F, s_P \rangle = \sum_{i \le 1} i \cdot \eta_i = d + \sum_{i \le -1}i \cdot \eta_i \,.
    \]
  Now the upper bound follows from:
    \[
      d \ge \sum_{i \le -1} |i|\cdot \eta_i \ge \sum_{i\le -1} \eta_{i} \,.
    \]
  With this inequality we also get the lower bound on $\eta_0$ since $|\Vert P| = \sum_{i\le 1} \eta_i$ gives us
    \[
      3d-k = d + \eta_0 + \sum_{i\le -1}\eta_i \le d + \eta_0 + d\,.
    \]
  Solving this for $\eta_0$ we get $d-k \le \eta_0$.

  The lower bound for $\eta_{-1}$ follows with a similar argument. From the equality $|\Vert P| = \sum_{i\le 1} \eta_i$ we get
    \begin{align*}
      3d-k - \eta_1 - \eta_0 - \eta_{-1} &= \sum_{i\le -2}\eta_i \,.
    \end{align*}
  Using the fact that we have a special facet and using the upper bounds on $\eta_0$ and $\eta_1$ gives us
    \begin{align*}
      0 &\le \langle u_F, s_P \rangle \\
        &= \sum_{i \le 1} i \cdot \eta_i \\
        &= d - \eta_{-1} + \sum_{i \le -2} i \cdot \eta_i \\
        &\le d - \eta_{-1} -2\sum_{i\le -2} \eta_i \\
        &= d - \eta_{-1} -2 (3d-k - \eta_1 - \eta_0 - \eta_{-1}) \\
        &\le d - \eta_{-1} -6d +2k + 2d +2d +2\eta_{-1} \\
        &= \eta_{-1} - d + 2k \,.
    \end{align*}
  Now, the upper bound for $\eta_{\le -2}$ is a direct consequence of the previous lower bounds:
    \begin{align*}
      \sum_{i\le -2}\eta_i &= 3d-k - \eta_1 - \eta_0 - \eta_{-1}\\
      & \le 3d-k -d -(d-k) -(d-2k) =2k \,.
    \end{align*}
  Finally, let $j < 0$ be the minimal level with $\eta_j \not= 0$. Let $v \in V(F,j)$. Then there are at least $3d-k-\eta_1 - \eta_0-1 = 2d-k-1-\eta_0$ vertices on negative levels
  apart from $v$. In particular, 
   \begin{align*}
    0 &\le \pro{u_F}{s_P}\\
      &\le d-(2d-k-1-\eta_0)+j \\
      &= -d+k+1+\eta_0+j\,,
   \end{align*}
  hence $j \ge d-\eta_0-k-1 \ge -k-1$, since $\eta_0 \le d$.
\end{proof}

\begin{lemma}\label{sum-level}
  The level of $s_P$ is at most $k$.
\end{lemma}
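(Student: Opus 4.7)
The plan is to compute $\pro{u_F}{s_P}$ directly in terms of the $\eta$-vector and then invoke the bounds already established in \autoref{card}. Writing $s_P = \sum_{v \in \Vert P} v$ and grouping vertices by level yields
\[
  \pro{u_F}{s_P} \;=\; \sum_{j \le 1} j \cdot \eta_j \;=\; d - \eta_{-1} + \sum_{j \le -2} j \cdot \eta_j,
\]
using $\eta_1 = d$ and the trivial contribution of level $0$. Since each $j \le -2$ satisfies $j \cdot \eta_j \le -2 \eta_j$, one immediately obtains
\[
  \pro{u_F}{s_P} \;\le\; d - \eta_{-1} - 2\eta_{\le -2}.
\]

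Next, I would eliminate $\eta_{-1}$ by using the total vertex count $|\Vert P| = 3d-k$: combined with $\eta_1 = d$ this gives the identity $\eta_{-1} + \eta_{\le -2} = 2d - k - \eta_0$, and substituting this into the previous inequality rearranges to
\[
  \pro{u_F}{s_P} \;\le\; -d + k + \eta_0 - \eta_{\le -2}.
\]
Applying $\eta_0 \le d$ and $\eta_{\le -2} \ge 0$ (both from \autoref{card}) gives $\pro{u_F}{s_P} \le k$, as desired. There is no real obstacle here: the statement is essentially a bookkeeping consequence of the estimates already collected in \autoref{card}. The only subtle point is to be careful about the order of substitutions, since the lazier route of simply invoking $\eta_{-1} \ge d - 2k$ produces the weaker bound $2k$ rather than the sharp $k$; one must exploit that vertices on levels $\le -2$ contribute a factor of at least $2$ in absolute value to $\pro{u_F}{s_P}$ in order to get the right constant.
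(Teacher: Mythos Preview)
Your proof is correct and is essentially the same argument as the paper's, just written out explicitly: the paper observes in one line that the level $\pro{u_F}{s_P}=\sum_j j\eta_j$ is maximized, subject to $\eta_1=d$, $\eta_0\le d$, and $\sum_j\eta_j=3d-k$, by the configuration $(\eta_1,\eta_0,\eta_{-1})=(d,d,d-k)$, giving value $k$. Your chain of inequalities unpacks exactly this optimization, and your closing remark correctly identifies why the cruder substitution $\eta_{-1}\ge d-2k$ loses the sharp constant.
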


\begin{proof}
  The maximal level of $s_P$ is achieved by having $d$ vertices on level $1$, $d$ vertices on level $0$, and the remaining vertices ($d-k$ many) on level $-1$.
\end{proof}

\subsection{Partitioning the vertex set of \texorpdfstring{$F$}{F}}
We will use the following notation introduced in \cite{AJP}.

\begin{definition}
  We call a vertex $v$ of the facet $F$ \emph{good} if $\opp(F,v)$ is contained in $V(F,0)$.
\end{definition}

As the following well-known result shows, the second condition of a good vertex given in \cite{AJP} is automatically true in our setting because we assume smoothness.

\begin{lemma}\label{minus}
  If $v$ is a vertex of $F$, then $\langle u_{F,v},\opp(F,v) \rangle=-1$.
\end{lemma}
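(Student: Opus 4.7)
The plan is to combine unimodularity with two instances of ``vertex-not-on-a-facet gives integral slack at most zero.''

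Set $v = v_1$ and enumerate $\Vert F = \{v_1,\ldots,v_d\}$, so $\{u_{F,v_1},\ldots,u_{F,v_d}\}$ is the dual basis. Write $v' := \opp(F,v)$ and $F' := \neigh(F,v)$, so that $\Vert F' = \{v_2,\ldots,v_d,v'\}$. Expanding in the lattice basis $\Vert F$, write
\[
  v' \;=\; a_1 v_1 + a_2 v_2 + \cdots + a_d v_d, \qquad a_i \in \ZZ,
\]
so that $\langle u_{F,v}, v'\rangle = a_1$. The first step is the standard unimodularity argument: since $P$ is smooth Fano, $\Vert F'$ is also a $\ZZ$-basis of $N$, and the change-of-basis matrix from $\Vert F$ to $\Vert F'$ has determinant $\pm a_1$; hence $a_1 \in \{+1,-1\}$.

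The second step is to exclude $a_1 = +1$. Since $v' \in P \setminus F$ and $u_F \in M$, the integer $\langle u_F, v'\rangle = a_1 + a_2 + \cdots + a_d$ is strictly less than $1$, hence at most $0$. If $a_1 = 1$ this forces
\[
  a_2 + \cdots + a_d \;\le\; -1.
\]
Now expand the outer normal $u_{F'}$ in the dual basis: because $\langle u_{F'}, v_j\rangle = 1$ for $j=2,\ldots,d$, one gets $u_{F'} = \alpha\, u_{F,v} + \sum_{j=2}^{d} u_{F,v_j}$ for some $\alpha \in \ZZ$. Pairing with $v'$ gives $1 = \alpha a_1 + (a_2 + \cdots + a_d)$, so $\alpha = 1 - (a_2+\cdots+a_d) \ge 2$. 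But $v \notin F'$ together with $u_{F'} \in M$ yields $\langle u_{F'}, v\rangle = \alpha \le 0$, a contradiction. Therefore $a_1 = -1$, i.e., $\langle u_{F,v}, v'\rangle = -1$.

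I do not expect any real obstacle here: the proof is a short duality calculation once one recognizes that the statement is the obvious generalization of \autoref{lem:nill_5.5} (which treats only the case $v' \in V(F,0)$) obtained by replacing the reflexivity/integrality input with the same input applied both to $u_F$ and to $u_{F'}$. The only subtlety is remembering that for a smooth Fano polytope one has $u_F, u_{F'} \in M$, so that the inequalities $\langle u_F,v'\rangle < 1$ and $\langle u_{F'},v\rangle < 1$ can be sharpened to $\le 0$; without this integrality one could not rule out the $+1$ case.
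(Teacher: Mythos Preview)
Your argument is correct. The paper itself does not supply a proof of this lemma: it is stated as a ``well-known result'' (justifying that the second condition in the definition of a good vertex from \cite{AJP} holds automatically under smoothness) and left unproved. So there is nothing to compare against; your write-up simply fills in the standard justification.

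Two small remarks on the exposition. First, the unimodularity step is exactly right: replacing $v$ by $v'$ in the basis $\Vert F$ gives a transition matrix that is lower-triangular with diagonal $(a_1,1,\ldots,1)$, so $a_1=\pm1$. Second, your closing comment slightly overstates the role of $u_F,u_{F'}\in M$. The integrality you actually use is that of the $a_i$ (which follows from $\Vert F$ being a lattice basis): already $\sum a_i\in\ZZ$ and $\sum a_i<1$ give $\sum a_i\le 0$, and then $\alpha=1-(a_2+\cdots+a_d)$ is automatically an integer, so $\alpha\ge 2$ contradicts $\alpha=\langle u_{F'},v\rangle<1$ without separately invoking $u_{F'}\in M$. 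This does not affect the validity of the proof.
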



Let us also recall the notation of the $\phi$-function (of a facet $F$) used in \cite{AJP}:

\begin{equation}\label{eq:phi}
  \phi \,:\, \Vert{F}\to\Vert{F}\cup\{0\} \,,\; v\mapsto\begin{cases} w & \text{if }
    w:=\opp(F,v)+v\in\Vert{F} \\ 0 & \text{otherwise\,.}\end{cases}
\end{equation}

Note that if $\phi(v) \not=0$, then $v$ is a good vertex of $F$, but there are good vertices for which $\phi(v) = 0$, so the converse is not true.

\begin{definition} We will partition the set $\Vert F$ of size $d$ into three different groups:
  \begin{itemize}
    \item $A := \{u \in \Vert F \;:\; u \text{ is good}, \phi(u)\ne 0\}$,
    \item $B := \{v \in \Vert F \;:\; v \text{ is good}, \phi(v)= 0\}$,
    \item $C := \{w \in \Vert F \;:\; w \text{ is not good}\}$.
\end{itemize}
\end{definition}

The following result clarifies the geometric meaning of a good vertex being in $A$ or $B$.

\begin{lemma}\label{lem:one_vertex_at_0}
  Let $v$ be a good vertex of the facet $F$ (i.e., $v \in A \cup B$).  Then $\opp(F,w) \ne \opp(F,v)$ for every vertex $w$ of $F$ other than $v$ if and only if $\phi(v)\ne 0$ (i.e., $v \in A$).
\end{lemma}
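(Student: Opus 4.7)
The plan is to expand the lattice point $\opp(F,v)$ in the basis given by $\Vert F = \{v_1,\ldots,v_d\}$. Writing $\opp(F,v) = \sum_i c_i v_i$ with integer coefficients $c_i = \langle u_{F,v_i}, \opp(F,v)\rangle$, I get from \autoref{minus} that $c_v = -1$, and from the assumption that $v$ is good (so $\opp(F,v) \in V(F,0)$) that
\[
  \sum_i c_i \ = \ \langle u_F, \opp(F,v)\rangle \ = \ 0.
\]
The key reformulation is then the following: by \autoref{lem:nill_5.5} applied to the level-zero lattice point $\opp(F,v)$, another vertex $v' \in \Vert F \setminus \{v\}$ satisfies $\opp(F,v') = \opp(F,v)$ if and only if $c_{v'} < 0$, and in that case $c_{v'} = -1$. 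So the whole statement becomes a question about the signs of the integer coefficients $c_{v'}$ for $v' \ne v$.

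For the forward direction, $v \in A$ means $\phi(v) = w \in \Vert F$, so by definition $\opp(F,v) = w - v$. Reading off the basis expansion gives $c_v = -1$, $c_w = +1$, and $c_{v'} = 0$ for every other vertex of $F$. In particular no coefficient $c_{v'}$ with $v' \ne v$ is negative, so by the reformulation above no other vertex of $F$ can have opposite vertex $\opp(F,v)$.

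For the reverse direction I would argue by contradiction: assume that $v$ is good, that $\opp(F,v') \ne \opp(F,v)$ for every $v' \in \Vert F \setminus \{v\}$, and derive $\phi(v) \ne 0$. Under this assumption, every $c_{v'}$ with $v' \ne v$ is a non-negative integer, and together with $c_v = -1$ and $\sum_i c_i = 0$ they must satisfy $\sum_{v' \ne v} c_{v'} = 1$. Hence exactly one of these coefficients equals $1$ (for some vertex $w$) and the rest vanish, which gives $\opp(F,v) + v = w \in \Vert F$, i.e.\ $\phi(v) = w \ne 0$.

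I do not expect a real obstacle: once the identification of "sharing the opposite vertex with $v$" with "having a negative coefficient in the expansion of $\opp(F,v)$" is in place via \autoref{lem:nill_5.5}, the proof reduces to a short bookkeeping of integer coefficients constrained by the two linear conditions $c_v = -1$ and $\sum_i c_i = 0$. Smoothness of $P$ enters only implicitly, guaranteeing integrality of the $c_i$ and the applicability of \autoref{lem:nill_5.5}.
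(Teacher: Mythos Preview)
Your proof is correct. Both directions follow cleanly once you reformulate ``$w$ shares the opposite vertex with $v$'' as ``the $w$-coefficient of $\opp(F,v)$ is negative'' via \autoref{lem:nill_5.5}, and the integer bookkeeping under the constraints $c_v=-1$ and $\sum_i c_i=0$ does the rest.

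Compared to the paper: for the direction ``no shared opposite $\Rightarrow \phi(v)\ne 0$'' the paper simply cites \cite[Lemma~25]{AJP}, whereas you give a self-contained coefficient argument. For the other direction the paper argues geometrically: if $\opp(F,w)=\opp(F,v)=:x$ for some $w\ne v$, then both $x$ and $v$ are vertices of the neighboring facet $\neigh(F,w)$, so $x+v$ lies strictly outside $P$ and cannot be a vertex, contradicting $\phi(v)\ne 0$. Your route instead reads off the explicit expansion $\opp(F,v)=\phi(v)-v$ and invokes \autoref{lem:nill_5.5} again. The paper's argument is facet-based and avoids computing coordinates; your argument is more uniform (the same mechanism handles both implications) and entirely self-contained within the paper. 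Either buys the result in a couple of lines.

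One cosmetic point: what you call ``by contradiction'' in the reverse direction is actually a direct derivation; there is no contradiction hypothesis introduced.
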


\begin{proof}
  The `only if' part is proven in \cite[Lemma~25]{AJP}. For the converse assume that $\phi(v)\ne 0$ and $x := \opp(F,w) = \opp(F,v)$ for some vertex $w\not=v$ of $F$. By the second condition, $x$ and $v$ lie in the
  neighboring facet $\neigh(F,w)$. Hence, $x+v$ cannot be a vertex of $P$, a contradiction to $\phi(v)\ne 0$.
\end{proof}

\begin{example}\label{example:hexagon-phi}
  In \autoref{hexagon-fig-phi} the behaviour of the $\phi$-function is illustrated. The vertices $e_1$ and $e_2$ are both \emph{good} and belong to the partition set $A$. We have $\phi(e_1)=e_2$ and $\phi(e_2)=e_1$.
\end{example}

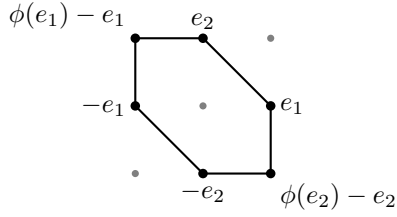
\begin{figure}[tb]
  \centering
		\begin{tikzpicture}[scale=0.9]
		  \tikzstyle{edge} = [draw,thick,-,black]
		
		  \foreach \x in {-1,0,1}
		    \foreach \y in {-1,0,1}
		       \fill[gray] (\x,\y) circle (1.5pt); ;
		
		  \coordinate (v0) at (0,0);
		  \coordinate (e1) at (1,0);
		  \coordinate (e2) at (0,1);
		  \coordinate (-e1+e2) at (-1,1);
		  \coordinate (-e2+e1) at (1,-1);
		  \coordinate (-e1) at (-1,0);
		  \coordinate (-e2) at (0,-1);
		
		  \draw[edge] (e1) -- (e2) -- (-e1+e2) -- (-e1) -- (-e2) -- (-e2+e1) -- (e1);
		
		  \foreach \point in {e1,e2,-e1+e2,-e2+e1,-e1,-e2}
		    \fill[black] (\point) circle (2pt);

      \node[right] at (e1){$e_1$};
      \node[above] at (e2){$e_2$};
      \node[above left] at (-e1+e2){$\phi(e_1)-e_1$};
      \node[left] at (-e1){$-e_1$};
      \node[below] at (-e2){$-e_2$};
      \node[below right] at (-e2+e1){$\phi(e_2)-e_2$};
		
		\end{tikzpicture}
  \caption{\label{hexagon-fig-phi} The hexagon $P_6$ with labeled vertices.}
\end{figure}

Let us deduce the following observations on the sizes of the sets $A,B,C$.

\begin{lemma}\label{ABC-card}
  \[
    d-2k \le |A|, \quad |B|+|C| \le 2k, \quad |C| \le k.
  \]
\end{lemma}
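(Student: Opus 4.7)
The plan is to prove all three bounds together by a careful counting of the map $v \mapsto \opp(F,v)$ restricted to the good vertices of $F$, landing in $V(F,0)$.

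First I would start with the bound $|C|\le k$, which is the easiest. By \autoref{lem:nill_5.5}, every vertex $x\in V(F,0)$ arises as $\opp(F,v)$ for some $v\in\Vert F$, and any such $v$ is by definition good (i.e., in $A\cup B$). Hence the map sending a good vertex to its opposite vertex is a \emph{surjection} $A\cup B\twoheadrightarrow V(F,0)$. Combined with the lower bound $\eta_0\ge d-k$ from \autoref{card}, this gives $|A|+|B|\ge\eta_0\ge d-k$, and since $|A|+|B|+|C|=d$, we conclude $|C|\le k$.

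Next I would upgrade this to the bound $|A|\ge d-2k$ by distinguishing, for each $x\in V(F,0)$, whether its fiber $S_x:=\{v\in\Vert F:\opp(F,v)=x\}$ has size $1$ or size $\ge 2$. By \autoref{lem:one_vertex_at_0}, vertices in $A$ are precisely the good vertices $v$ whose fiber $S_{\opp(F,v)}$ is a singleton, and vertices in $B$ are the good vertices lying in fibers of size $\ge 2$. Writing $n_1$ and $n_{\ge 2}$ for the number of $x\in V(F,0)$ with $|S_x|=1$ and $|S_x|\ge 2$ respectively, we have $|A|=n_1$, $|B|\ge 2n_{\ge 2}$, and $n_1+n_{\ge 2}=\eta_0$. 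Eliminating $n_{\ge 2}$ yields the key inequality $2|A|+|B|\ge 2\eta_0$.

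Finally, substituting $|B|=d-|A|-|C|$ gives $|A|\ge 2\eta_0-d+|C|\ge 2(d-k)-d+|C|=d-2k+|C|\ge d-2k$, and the middle bound $|B|+|C|=d-|A|\le 2k$ follows immediately. I do not foresee any serious obstacle: the entire argument is a direct bookkeeping of the $\opp$-map over $V(F,0)$, combining the two structural lemmas (\autoref{lem:nill_5.5} for surjectivity and \autoref{lem:one_vertex_at_0} for the fiber dichotomy between $A$ and $B$) with the level count $\eta_0\ge d-k$ from \autoref{card}.
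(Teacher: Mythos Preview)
Your proof is correct and follows essentially the same approach as the paper: both arguments count fibers of the map $v\mapsto\opp(F,v)$ from the good vertices onto $V(F,0)$, combining \autoref{lem:nill_5.5}, \autoref{lem:one_vertex_at_0}, and the bound $\eta_0\ge d-k$ from \autoref{card}. Your formulation via the inequality $2|A|+|B|\ge 2\eta_0$ is a slightly cleaner packaging of the same counting that the paper does by choosing one representative per fiber and bounding the number of ``sharers'' among the remaining $\le k$ vertices.
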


\begin{proof}
  By \autoref{lem:nill_5.5} every vertex on level $0$ must be opposite to some vertex in $F$, and clearly two different vertices on level $0$ cannot be opposite to the same vertex in $F$, as opposite vertices are unique. Therefore, the bound $\eta_0 \ge d-k$, see \autoref{card}, implies that at least $d-k$ vertices have pairwise distinct opposite vertices on level $0$. In particular, at least $d-k$ vertices of $F$ must be good, i.e., $|C| \le k$. Moreover, there can be at most $k$ of these $d-k$ good vertices, say $v$, that have the property that there exists another vertex $\bar v \in \Vert F$ with
  $\opp(F,\bar v)=\opp(F,v)$. Therefore, \autoref{lem:one_vertex_at_0} implies $|A| \ge d-2k$, and thus $|B|+|C| \le 2k$.
\end{proof}

\begin{example}
  The reader should be aware that we do not claim that $|B|\le k$. In fact, $|B| = 2k$ is possible as the following example shows. For this, consider the convex hull of the following $10$ points in dimension~$d=4$:
  \begin{gather*}
    e_1,\ e_2,\ e_3,\ e_4\\
    -e_1-e_2+e_3+e_4,\ -e_3-e_4+e_1+e_2\\
    -e_1,\ -e_2,\ -e_3,\ -e_4\,.
  \end{gather*}
  This polytope is indeed a smooth Fano polytope with $10$ vertices, so $k=2$, as $3\cdot d = 12$. The vertices $e_1,e_2,e_3,e_4$ form a special facet $F$. Here, all the vertices of $F$ are in the partition set $B$, e.g., $e_1$ and $e_2$ share the same opposite vertex $-e_1-e_2+e_3+e_4$. Therefore $|B| = 4 = 2k$. This example can be generalized to higher dimensions, where all the vertices of a special facet $F$ are good, and yet $k$ pairs of vertices share an opposite vertex.
\end{example}

Let us summarize what we know about the opposite vertices (keep in mind that $\Vert F$ forms a lattice basis).

\begin{proposition}\label{characterization:opp}
Let $z \in \Vert F$.
  \begin{enumerate}
    \item If $z \in A$, then $\opp(F,z) = -z + \phi(z)$.

    \item If $z\in B$, then
        \[
          \opp(F,z) = -z + \sum_{u\in A}a_u u + \sum_{\mathclap{v\in B\backslash\{z\}}}b_v v + \sum_{w\in C}c_w w
        \]
      with $a_u,c_w \in  \ZZ_{\ge 0}$ and $b_v \in \ZZ_{\ge -1}$ for all $u\in A$, $v\in B\backslash\{z\}$ and $w\in C$. Further it holds
        \begin{align*}
          \sum_{u\in A}a_u + \sum_{\mathclap{v\in B\backslash\{z\}}}b_v + \sum_{w\in C}c_w &= 1, &
          \sum_{u\in A} a_u &\le k+1,&
          \sum_{\mathclap{v\in B\backslash\{z\}}}b_v &\ge -k\,.
        \end{align*}

    \item If $z\in C$, then
        \[
          \opp(F,z) = -z + \sum_{u\in A}a_u u + \sum_{v\in B}b_v v + \sum_{\mathclap{w\in C\backslash\{z\}}}c_w w
        \]
      with $a_u,b_v,c_w \in \ZZ$ for all $u\in A$, $v\in B$ and $w\in C\backslash\{z\}$. Further it holds
        \[
          \sum_{u\in A}a_u + \sum_{v\in B}b_v + \sum_{\mathclap{w\in C\backslash\{z\}}}c_w \le 0\,.
        \]
  \end{enumerate}
\end{proposition}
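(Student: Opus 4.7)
The common thread of all three parts is to expand $\opp(F,z)$ in the lattice basis $\Vert F$: writing $\opp(F,z) = \sum_{y \in \Vert F} \alpha_y y$, one has $\alpha_y = \pro{u_{F,y}}{\opp(F,z)}$, and \autoref{minus} gives $\alpha_z = -1$ in every case. All three statements then come from reading off sign and sum constraints on the remaining $\alpha_y$ from the position of $\opp(F,z)$ in $P$. Part (i) is immediate from the definition of $\phi$ in \eqref{eq:phi}: if $z \in A$ then $\phi(z) = \opp(F,z) + z \in \Vert F$, so $\opp(F,z) = -z + \phi(z)$.

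For part (ii), $z$ being good forces $\opp(F,z) \in V(F,0)$. The level equation $\pro{u_F}{\opp(F,z)} = 0$ gives $\sum_{y \neq z} \alpha_y = 1$. Applying \autoref{lem:nill_5.5} to $x := \opp(F,z)$ shows $\alpha_y \geq -1$ for every $y$, with equality precisely when $\opp(F,y) = \opp(F,z)$. I rule out this equality for $y \in A$ via \autoref{lem:one_vertex_at_0}, and for $y \in C$ because $\opp(F,y) \notin V(F,0) \ni \opp(F,z)$; this delivers $a_u \geq 0$ and $c_w \geq 0$, while only $b_v \geq -1$ is available for $v \in B \setminus \{z\}$.

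The decisive step is the sharp bound $\sum_v b_v \geq -k$; the companion inequality $\sum_u a_u \leq k+1$ then follows immediately from $\sum_{y \neq z}\alpha_y = 1$ together with $c_w \geq 0$. Since $b_v = -1$ exactly when $\opp(F,v) = \opp(F,z)$, it suffices to bound the fiber of $\opp|_B$ through $z$. By \autoref{lem:nill_5.5} the map $\opp \colon A \cup B \to V(F,0)$ is surjective; it is injective on $A$ and its restrictions to $A$ and $B$ have disjoint images, both by \autoref{lem:one_vertex_at_0}. Hence $\eta_0 = |A| + |\opp(B)|$. Combining this with $|A|+|B|+|C| = d$ and $\eta_0 \geq d-k$ from \autoref{card} yields $|B| - |\opp(B)| \leq k - |C| \leq k$, so every fiber of $\opp|_B$ has at most $k+1$ elements, at most $k$ of the $b_v$ equal $-1$, and $\sum b_v \geq -k$.

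For part (iii), $z \in C$ is not good, so $\opp(F,z) \notin V(F,0)$. Since $\opp(F,z)$ is a lattice vertex of $P$ contained in the facet $\neigh(F,z) \neq F$, the integer $\pro{u_F}{\opp(F,z)}$ is strictly less than $1$, hence at most $0$; level $0$ is excluded, so in fact $\leq -1$. This translates directly to $\sum_{y \neq z} \alpha_y \leq 0$, as claimed. The main obstacle I anticipate is isolating the counting identity $\eta_0 = |A| + |\opp(B)|$ and chaining it with \autoref{card} to obtain $\sum b_v \geq -k$; everything else reduces to bookkeeping in the dual basis.
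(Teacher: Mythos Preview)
Your proof is correct and follows essentially the same approach as the paper: expand $\opp(F,z)$ in the basis $\Vert F$, use \autoref{minus} for the $-1$ coefficient at $z$, use the level of $\opp(F,z)$ for the sum constraint, and use \autoref{lem:nill_5.5} together with \autoref{lem:one_vertex_at_0} for the sign constraints. The paper obtains the fiber bound in part~(ii) by referring back to the proof of \autoref{ABC-card}; your explicit identity $\eta_0 = |A| + |\opp(B)|$ is a cleaner, self-contained packaging of that same counting argument, and the deduction that each fiber of $\opp|_B$ has size at most $k+1$ from $|B|-|\opp(B)|\le k$ is exactly what is needed.
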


\begin{proof}
  \begin{enumerate}
    \item Follows by definition.

    \item The first statement is a consequence of \autoref{lem:nill_5.5} and \autoref{lem:one_vertex_at_0}. The second equation follows since $\opp(F,v)$ is on level $0$. Finally, as in the proof of \autoref{ABC-card}, we note that for $z \in B$, there can be at most $k$ other vertices $\bar z \in \Vert F$
        (necessarily, $\bar z \in B$) with $\opp(F,\bar z) = \opp(F,z)$. By \autoref{lem:nill_5.5} this implies that there are at most $k$ other vertices $\bar z\in B$, $\bar z \not= z$, with $b_{\bar z} = \langle u_{F,\bar z}, \opp(F,z) \rangle <0$ (equivalently, equal to $-1$). We deduce that $\sum_{v\in B\backslash\{z\}}b_v \ge -k$. Since $\opp(F,z) \in V(F,0)$, this implies also that $\sum_{v\in A} a_u \le k+1$.

    \item This is as before a direct consequence of \autoref{minus} together with the fact that $z$ is not a good vertex and so the opposite vertex must lie on a level below $0$.
  \end{enumerate}
\end{proof}

\subsection{Vertices on negative levels}
Let us recall some of \Obro's observations.

\begin{lemma}[{\Obro~\cite[Lem.~1 and~2]{Obro08}}]\label{lem:1+2}
  Let $G$ be a facet of $P$, $z$ a vertex of $G$, and $G':=\neigh(G,z)$.
  Then $u_{G'} = u_G + (\langle u_{G'},z \rangle -1)u_{G,z}$. Moreover, for $x\in P$ we have
  $\langle u_{G}, x \rangle -1 \le \langle u_{G,z}, x \rangle$. In case of
  equality we have $x = \opp(G,z)$.
\end{lemma}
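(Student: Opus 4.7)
The plan is to verify the first identity by checking agreement on the lattice basis $\Vert G$, and then to deduce the inequality by exhibiting $u_G - u_{G,z}$ as a convex combination of the two outer facet normals $u_G$ and $u_{G'}$.

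For the first claim, set $\tilde u := u_G + (\langle u_{G'}, z\rangle - 1)\, u_{G,z}$. Since the ridge $G \cap G'$ has vertex set $\Vert G \setminus \{z\}$, for each $v \in \Vert G \setminus \{z\}$ we have $\langle u_{G'}, v\rangle = 1$ and $\langle u_{G,z}, v\rangle = 0$, so $\langle \tilde u, v\rangle = 1 = \langle u_{G'}, v\rangle$. Evaluating at $z$ gives $\langle \tilde u, z\rangle = 1 + (\langle u_{G'}, z\rangle - 1) = \langle u_{G'}, z\rangle$. Because $\Vert G$ is a $\ZZ$-basis of $N$, the linear functionals $\tilde u$ and $u_{G'}$ agree everywhere.

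For the inequality, set $\alpha := \langle u_{G'}, z\rangle$. Since $z$ is a vertex of $P$ not lying on the facet $G'$, we have $\alpha < 1$, and integrality forces $\alpha \le 0$. Rearranging the first identity produces
\[
u_G - u_{G,z} \;=\; \frac{-\alpha}{1-\alpha}\, u_G \;+\; \frac{1}{1-\alpha}\, u_{G'},
\]
which is a convex combination of $u_G$ and $u_{G'}$: both coefficients are nonnegative (since $-\alpha \ge 0$ and $1 - \alpha \ge 1$) and they sum to $1$. Pairing with any $x \in P$ and using $\langle u_G, x\rangle, \langle u_{G'}, x\rangle \le 1$ yields $\langle u_G, x\rangle - \langle u_{G,z}, x\rangle \le 1$, which is the claimed inequality.

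For the equality statement, equality in the bound forces $\langle u_{G'}, x\rangle = 1$ (its coefficient is strictly positive), and additionally $\langle u_G, x\rangle = 1$ whenever $\alpha < 0$. In either case $x$ must lie on the facet $G'$; for a vertex of $P$ not already in $\Vert G$, the only option is $x = \opp(G, z)$, which also forces $\alpha = 0$. The only delicate point of the argument is pinning down $\alpha \le 0$: without this the rewriting above would not be a convex combination and the bound would collapse. Everything else is formal once the first identity is in hand.
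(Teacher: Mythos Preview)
The paper does not supply its own proof of this lemma; it is quoted from \Obro, so there is no in-paper argument to compare against. Your proof is correct. The verification of the identity on the lattice basis $\Vert G$ is the standard one, and rewriting $u_G - u_{G,z}$ as a convex combination of $u_G$ and $u_{G'}$ (using $\alpha \le 0$) is a clean way to obtain the inequality.

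One point worth making explicit: the equality clause, read literally for arbitrary $x \in P$, is not quite true. Any $w \in \Vert G \setminus \{z\}$ satisfies $\langle u_G, w\rangle - 1 = 0 = \langle u_{G,z}, w\rangle$, yet $w \ne \opp(G,z)$. You implicitly handle this by restricting to vertices $x \notin \Vert G$, which is exactly the situation in which the equality case is invoked later in the paper (e.g.\ in the proof of \autoref{prop:number_of_-e_i}, where $x \in V(F,-1)$). With that qualifier your deduction is correct: equality forces $x \in G'$, and the only vertex of $G'$ outside $\Vert G$ is $\opp(G,z)$.
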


\begin{corollary}\label{cor-star}
  If $z \in A \cup B$, then $u_{F'} = u_F - u_{F,z}$ for $F' := \neigh(F,z)$.
\end{corollary}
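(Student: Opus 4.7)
The proof should be a direct computation applying \autoref{lem:1+2} with $G = F$ and $G' = F' := \neigh(F,z)$. That lemma immediately gives
\[
  u_{F'} \;=\; u_F + \bigl(\langle u_{F'}, z\rangle - 1\bigr)\, u_{F,z},
\]
so the corollary reduces to verifying the single identity $\langle u_{F'}, z\rangle = 0$.

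To establish this, I would exploit the opposite vertex $x := \opp(F,z)$. Since $z \in A \cup B$ is a good vertex, by definition $x \in V(F,0)$, i.e., $\langle u_F, x\rangle = 0$. On the other hand, $x$ is a vertex of $F'$, so $\langle u_{F'}, x\rangle = 1$. Pairing the displayed formula with $x$ yields
\[
  1 \;=\; \langle u_{F'}, x\rangle \;=\; \langle u_F, x\rangle + \bigl(\langle u_{F'}, z\rangle - 1\bigr)\,\langle u_{F,z}, x\rangle.
\]
The first summand on the right vanishes, and by \autoref{minus} we have $\langle u_{F,z}, x\rangle = -1$. Solving the resulting equation $1 = -(\langle u_{F'}, z\rangle - 1)$ gives $\langle u_{F'}, z\rangle = 0$, and substituting back into the displayed formula from \autoref{lem:1+2} produces $u_{F'} = u_F - u_{F,z}$, as required.

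There is essentially no obstacle here: the only nontrivial input beyond \autoref{lem:1+2} is that goodness of $z$ forces $\opp(F,z)$ onto level $0$, which is built into the definitions of $A$ and $B$, and the identity $\langle u_{F,z}, \opp(F,z)\rangle = -1$ from \autoref{minus}. The corollary is really just the specialization of \autoref{lem:1+2} to the case where the neighboring facet lies one level below $F$ in the direction dual to $z$.
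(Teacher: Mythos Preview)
Your proof is correct and matches the paper's approach exactly: the paper's one-line proof ``plugging in $x:=\opp(F,z)$ into \autoref{lem:1+2}'' is precisely the computation you carry out, pairing the formula $u_{F'} = u_F + (\langle u_{F'},z\rangle - 1)u_{F,z}$ with $x=\opp(F,z)$ and using goodness of $z$ together with \autoref{minus} to solve for $\langle u_{F'},z\rangle = 0$.
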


\begin{proof}
This follows from plugging in $x:=\opp(F,z)$ into \autoref{lem:1+2}.	\end{proof}

The following observation is contained in \Obro's PhD thesis \cite[Lemma 1.9(5)]{OebroPhD}.

\begin{lemma}\label{lem:opposite_is_shortest}
  Let $x$ be a vertex of $P$ with $x\ne \opp(F,z)$ for some vertex $z \in \Vert F$. Then the inequality $\langle u_{F,z}, x \rangle < 0$ implies $\langle u_F , x \rangle < \langle u_F, \opp(F,z) \rangle$. In other words, if $z$ has a negative contribution to $x$ then the vertex $x$ must lie on a level strictly lower than the opposite vertex of $z$.
\end{lemma}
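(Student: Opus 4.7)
The natural tool is the change-of-normal formula from \autoref{lem:1+2}: set $F':=\neigh(F,z)$ and $y:=\opp(F,z)$ (the unique vertex of $F'$ not in $F$), put $\alpha:=\pro{u_{F'}}{z}$, and observe that since $z\in P\setminus F'$ we have $\alpha\le 0$. \autoref{lem:1+2} then gives $u_{F'}=u_F+(\alpha-1)u_{F,z}$, which I will rewrite in the more convenient form
\[
  u_F \;=\; u_{F'}+(1-\alpha)\,u_{F,z},\qquad 1-\alpha\ge 1.
\]

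The plan is to pair this identity with $y$ and with $x$ and compare. For $y$, use $y\in F'$ to get $\pro{u_{F'}}{y}=1$, and \autoref{minus} to get $\pro{u_{F,z}}{y}=-1$; this yields the clean identity $\pro{u_F}{y}=1-(1-\alpha)=\alpha$. Thus the goal reduces to showing $\pro{u_F}{x}<\alpha$.

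For $x$, I will split on the value of $\pro{u_{F'}}{x}$. The key combinatorial step is to rule out $\pro{u_{F'}}{x}=1$: in that case $x$ would be a vertex of $F'$, and since $\Vert F'=(\Vert F\setminus\{z\})\cup\{y\}$ and $x\ne y$, we would have $x\in\Vert F\setminus\{z\}$, forcing $\pro{u_{F,z}}{x}=0$ and contradicting the hypothesis. Therefore $\pro{u_{F'}}{x}\le 0$. Combined with $\pro{u_{F,z}}{x}\le -1$ (integer and negative) and $1-\alpha\ge 1$, the displayed identity gives
\[
  \pro{u_F}{x}\;=\;\pro{u_{F'}}{x}+(1-\alpha)\pro{u_{F,z}}{x}\;\le\;0-(1-\alpha)\;=\;\alpha-1\;<\;\alpha\;=\;\pro{u_F}{y},
\]
which is exactly the claim.

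There is no real obstacle beyond being careful that every piece is in place: the integrality of $\alpha$ and of $\pro{u_{F,z}}{x}$, the facet-vertex argument excluding $\pro{u_{F'}}{x}=1$, and the correct application of \autoref{minus} (which crucially does not require $y$ to lie on level $0$, so the lemma holds for arbitrary $z\in\Vert F$, not only good ones). Everything else is just unwinding the two-term expression for $u_F$ in terms of $u_{F'}$ and $u_{F,z}$.
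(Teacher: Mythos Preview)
Your proof is correct. The paper itself does not prove this lemma but only cites it from \Obro's thesis, so there is no in-paper argument to compare against; that said, your derivation via \autoref{lem:1+2} and \autoref{minus} is exactly the natural route given the tools the paper has set up, and every step checks out (in particular the exclusion of $\pro{u_{F'}}{x}=1$ via $\Vert F'=(\Vert F\setminus\{z\})\cup\{y\}$ and the integrality arguments for $\alpha$ and $\pro{u_{F,z}}{x}$).
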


As we know that for all $w\in C$ the vertex $\opp(F,w)$ lies strictly below level $0$, we get the following consequence.

\begin{corollary}\label{cor:bad+level-1_implies_opposite}
  There is at most one vertex $x\in V(F,-1)$ with $\langle u_{F,w}, x \rangle <0$ for some $w\in C$. If such a vertex $x$ exists, then $x = \opp(F, w)$.
\end{corollary}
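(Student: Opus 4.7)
The plan is to apply \autoref{lem:opposite_is_shortest} directly, using the hypothesis that $w\in C$ is not good. Fix any $w \in C$. Since $w$ is not good, $\opp(F,w) \notin V(F,0)$; combined with the fact (noted after \autoref{card} and used implicitly throughout) that $\eta_1 = d$ forces every vertex of $P$ outside $F$ to sit at level $\le 0$, this means $\opp(F,w)$ lies strictly below level $0$, i.e. at level $\le -1$.

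Now suppose $x \in V(F,-1)$ satisfies $\pro{u_{F,w}}{x} < 0$. If $x \ne \opp(F,w)$, then \autoref{lem:opposite_is_shortest} applied with $z := w$ gives
\[
  \pro{u_F}{x} \;<\; \pro{u_F}{\opp(F,w)}.
\]
The left-hand side is $-1$, while the right-hand side is at most $-1$ by the observation above, yielding $-1 < -1$, a contradiction. Hence $x = \opp(F,w)$, which proves the second assertion and, by uniqueness of the opposite vertex, also the first.

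The step I expect to be routine is the contradiction itself; the only mild subtlety is remembering why the level of $\opp(F,w)$ is at most $-1$ rather than some value between $-1$ and $0$. This is immediate once one recalls that $\opp(F,w)$ is a lattice point of $P$ not lying in $F$, so its $u_F$-pairing is an integer strictly less than $1$, and the definition of ``good'' (together with \autoref{lem:nill_5.5}) excludes the value $0$. No further combinatorics or bookkeeping from the $A/B/C$ partition is needed.
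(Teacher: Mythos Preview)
Your proof is correct and follows exactly the paper's intended argument: the paper itself does not give a separate proof but simply prefaces the corollary with ``As we know that for all $w\in C$ the vertex $\opp(F,w)$ lies strictly below level $0$, we get the following consequence,'' which is precisely the observation you unpack before applying \autoref{lem:opposite_is_shortest}. The only (harmless) excess is your mention of \autoref{lem:nill_5.5} at the end; the exclusion of level $0$ for $\opp(F,w)$ is immediate from the definition of ``good'' alone.
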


Let us make some auxiliary considerations regarding vertices on negative levels.

\begin{lemma}\label{lem:no_two_verts_with_very_low_coordinate}
  Let $z\in A\cup B$. For every $\bar z \in \Vert F$ there is at most one vertex $x \in V(F,-1)$ with $\langle u_{F,z}, x \rangle = -1$ and $\langle u_{F,\bar z},x \rangle < \langle u_{F,\bar z}, \opp(F,z) \rangle$.
\end{lemma}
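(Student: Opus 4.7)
The plan is to pass to the neighbouring facet $F' := \neigh(F,z)$ and reinterpret the hypotheses there. Since $z \in A \cup B$ is good, \autoref{cor-star} gives $u_{F'} = u_F - u_{F,z}$. Consequently, every candidate $x \in V(F,-1)$ with $\langle u_{F,z}, x \rangle = -1$ automatically satisfies $\langle u_{F'}, x \rangle = -1 - (-1) = 0$, so $x$ lies on level $0$ relative to $F'$. The strategy is then to translate the inequality $\langle u_{F,\bar z}, x \rangle < \langle u_{F,\bar z}, \opp(F,z) \rangle$ into a strict sign condition against an element of the dual basis of $F'$, and invoke \autoref{lem:nill_5.5} applied to $F'$ in order to pin $x$ down as an opposite vertex (hence unique).

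The case $\bar z = z$ is immediate: by \autoref{minus} we have $\langle u_{F,z}, \opp(F,z) \rangle = -1$, and so the required strict inequality $-1 < -1$ has no solution.

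For $\bar z \in \Vert F \setminus \{z\}$, both $\bar z$ and $\opp(F,z)$ lie in $\Vert F' = (\Vert F \setminus \{z\}) \cup \{\opp(F,z)\}$, and a direct check against the three kinds of vertices of $F'$ shows
\[
u_{F',\bar z} \; = \; u_{F,\bar z} + \alpha\, u_{F,z}, \qquad \text{where } \alpha := \langle u_{F,\bar z}, \opp(F,z) \rangle.
\]
Plugging in $\langle u_{F,z}, x \rangle = -1$ yields $\langle u_{F',\bar z}, x \rangle = \langle u_{F,\bar z}, x \rangle - \alpha$, which is strictly negative precisely when $\langle u_{F,\bar z}, x \rangle < \alpha$. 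Together with $x \in V(F',0)$, \autoref{lem:nill_5.5} applied to $F'$ then forces $x = \opp(F', \bar z)$, a single vertex.

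I do not expect any serious obstacle: the argument amounts to a change-of-basis computation between the dual bases associated with the two adjacent facets $F$ and $F'$, after which the second characterization of opposite vertices in \autoref{lem:nill_5.5} does the work. The only point requiring care is to confirm that the hypothesis $z \in A \cup B$ is genuinely used, which happens via $\opp(F,z) \in V(F,0)$ in the invocation of \autoref{cor-star}.
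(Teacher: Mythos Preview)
Your proof is correct and follows essentially the same approach as the paper: pass to the neighbouring facet $F'=\neigh(F,z)$, use \autoref{cor-star} to place $x$ on level~$0$ with respect to $F'$, and then invoke \autoref{lem:nill_5.5} to identify $x$ as $\opp(F',\bar z)$. The only cosmetic difference is that the paper expands $x$ explicitly in the basis $\Vert F'$ to read off $\langle u_{F',\bar z},x\rangle = \langle u_{F,\bar z},x\rangle - \langle u_{F,\bar z},\opp(F,z)\rangle$, whereas you obtain the same identity via the dual-basis relation $u_{F',\bar z}=u_{F,\bar z}+\alpha\,u_{F,z}$; you are also slightly more explicit about the vacuous case $\bar z=z$.
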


\begin{proof}
  Let us denote $y=\opp(F,z)$. We have $\pro{u_F}{y} =0$, since $z$ is good. Suppose there are two vertices $x,\bar x \in V(F,-1)$ with $\langle u_{F,z}, x \rangle = \langle u_{F,z}, \bar x \rangle = -1$ and $\langle u_{F,\bar z},x \rangle < \langle u_{F,\bar z}, y \rangle$ as well as $\langle u_{F,\bar z},\bar x \rangle < \langle u_{F,\bar z}, y \rangle$ for some $\bar z\in \Vert F$.
  We rewrite $x$, $\bar x$ and $y$ as:
    \begin{align*}
      x &= \sum_{\mathclap{v\in \Vert F}} \overbrace{\langle u_{F,v}, x \rangle}^{\lambda_v:=} v\,, &
      \bar x &= \sum_{\mathclap{v\in \Vert F}} \overbrace{\langle u_{F,v}, \bar x \rangle}^{\mu_v:=} v\,, &
      y &= \sum_{\mathclap{v\in \Vert F}} \overbrace{\langle u_{F,v}, y \rangle}^{\alpha_v:=} v.
    \end{align*}
 Considering the neighboring facet $F' := \neigh(F,z)$ and its outer facet normal $u_{F'}$, \autoref{cor-star} implies $u_{F'} = u_F  - u_{F,z}$, so $x$ and $\bar x$ both lie on level $0$ with respect to $F'$. We rewrite $x$ and $\bar x$ in terms of the new
  lattice basis $\Vert F' = \Vert F \setminus \{ z \} \cup \{ y \}$:
  \begin{align*}
    x &= y +\sum_{v\ne z} \underbrace{(\lambda_v - \alpha_v)}_{=\langle u_{F',v}, x\rangle} v\,,
    & \bar x &= y+\sum_{v \ne z} \underbrace{(\mu_v - \alpha_v)}_{=\langle u_{F',v}, \bar x\rangle} v,
  \end{align*}
where we used $\lambda_z = \mu_z = \alpha_z=-1$. Our assumptions on $\bar z$ gives us that $\lambda_{\bar z} < \alpha_{\bar z}$ and $\mu_{\bar z} < \alpha_{\bar z}$. So \autoref{lem:nill_5.5} yields
$x = \opp(F',\bar z) = \bar x$.
\end{proof}

\begin{lemma}\label{lem:at_least_one_vertex_with_low_coordinate}
  Let $x,\bar x \in \Vert P$ with $\langle u_F,x \rangle < \langle u_F,\bar x \rangle$. Then there must exist at least one vertex $z\in \Vert F$ with $\langle u_{F,z},x \rangle < \langle u_{F,z}, \bar x \rangle$.
\end{lemma}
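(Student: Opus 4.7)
The plan is to give a direct one-line argument based on the fact, recalled in the preliminaries, that the outer normal of the facet $F$ decomposes as $u_F = \sum_{z \in \Vert F} u_{F,z}$ over the dual basis associated to the vertices of $F$.

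More precisely, I would argue by contrapositive. Suppose that $\langle u_{F,z}, x \rangle \ge \langle u_{F,z}, \bar x \rangle$ for every $z \in \Vert F$. Summing this inequality over all $d$ vertices $z$ of $F$ and using linearity of the pairing yields
\[
  \langle u_F, x \rangle \;=\; \sum_{z \in \Vert F} \langle u_{F,z}, x \rangle \;\ge\; \sum_{z \in \Vert F} \langle u_{F,z}, \bar x \rangle \;=\; \langle u_F, \bar x \rangle,
\]
which contradicts the hypothesis $\langle u_F, x \rangle < \langle u_F, \bar x \rangle$. Hence at least one $z \in \Vert F$ must strictly satisfy $\langle u_{F,z}, x \rangle < \langle u_{F,z}, \bar x \rangle$, as claimed.

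There is no real obstacle here: the identity $u_F = \sum_{z \in \Vert F} u_{F,z}$ is stated in the preliminaries as part of the definition of the dual basis with respect to the facet $F$, and the rest is just pigeonholing on a sum of integers. The statement is really a bookkeeping lemma that will be applied later to compare levels of vertices via their coordinates with respect to $\Vert F$.
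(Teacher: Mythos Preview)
Your argument is correct and is essentially identical to the paper's own proof: both proceed by contrapositive, sum the inequalities $\langle u_{F,z}, x\rangle \ge \langle u_{F,z}, \bar x\rangle$ over $z \in \Vert F$, and use $u_F = \sum_{z \in \Vert F} u_{F,z}$ to derive the contradiction $\langle u_F, x\rangle \ge \langle u_F, \bar x\rangle$.
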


\begin{proof}
  Suppose this is not true, so $\langle u_{F,z},x \rangle \ge \langle u_{F,z}, \bar x \rangle$ for every $z\in \Vert F$. Then we get a contradiction from
    \[
      \langle u_F,x \rangle = \sum_{z\in \Vert F}\langle u_{F,z},x \rangle \ge \sum_{z\in\Vert F} \langle u_{F,z}, \bar x \rangle =  \langle u_F,\bar x \rangle.
    \]
\end{proof}

\begin{lemma}[{\Obro~\cite[Lem.~5]{Obro08}}]\label{lem:phi:notwonegative}
  Let $v$ and $w\ne v$ be good vertices of the facet $F$ with $\opp(F,v) \ne \opp(F,w)$. Then there exists no vertex $x\in V(F,-1)$ such that $\langle u_{F,v}, x\rangle = \langle u_{F,w}, x\rangle = -1$.
\end{lemma}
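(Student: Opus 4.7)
The plan is to argue by contradiction. Suppose such a vertex $x \in V(F,-1)$ exists, and set $y_v := \opp(F,v)$ and $y_w := \opp(F,w)$; both lie on level $0$ of $F$ by goodness and are distinct by hypothesis. I would expand in the lattice basis $\Vert F$,
\[
  y_v = -v + \sum_{u\in \Vert F\setminus\{v\}} \alpha_u\, u, \qquad y_w = -w + \sum_{u\in \Vert F\setminus\{w\}} \beta_u\, u.
\]
Since $y_v \ne \opp(F,w)$ and $y_w \ne \opp(F,v)$, \autoref{lem:nill_5.5} gives $\alpha_w,\beta_v \ge 0$. The strategy is to force both coefficients to vanish via a single neighbor step, and then to derive a contradiction from a second neighbor step.

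First, I would pass to the facet $F' := \neigh(F,v)$. By \autoref{cor-star}, $u_{F'} = u_F - u_{F,v}$, so $x$ lies on level $0$ of $F'$. Substituting $v = -y_v + \sum_{u\ne v}\alpha_u u$ into the expansion of an arbitrary $z$ in $\Vert F$ yields the change-of-dual-basis identity
\[
  u_{F',w} = u_{F,w} + \alpha_w\, u_{F,v},
\]
and therefore $\langle u_{F',w},x\rangle = -1 - \alpha_w \le -1$. Applying \autoref{lem:nill_5.5} to $F'$ with the level-$0$ vertex $x$ forces this value to equal $-1$, so $\alpha_w = 0$ and $x = \opp(F',w)$. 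The symmetric argument, passing instead through $\neigh(F,w)$, yields $\beta_v = 0$.

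For the contradiction I would take one further neighbor step: let $G := \neigh(F',w)$. Since $\opp(F',w) = x$ lies on level $0$ of $F'$, the vertex $w$ is good with respect to $F'$, so \autoref{cor-star} applied to $F'$ gives $u_G = u_{F'} - u_{F',w}$. Because $\alpha_w = 0$, the identity above collapses to $u_{F',w} = u_{F,w}$, hence $u_G = u_F - u_{F,v} - u_{F,w}$. Evaluating at $y_w$ gives
\[
  \langle u_G, y_w\rangle = 0 - \beta_v - (-1) = 1,
\]
so $y_w$ lies on level $1$ of $G$ and must therefore belong to $\Vert G = (\Vert F\setminus\{v,w\})\cup\{y_v,x\}$. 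But $y_w$ is on level $0$ of $F$ (so not in $\Vert F$), is distinct from $y_v$ by assumption, and cannot equal $x$ which sits on level $-1$ of $F$. This is the desired contradiction. I expect the one nontrivial input to be the basis-change identity $u_{F',w} = u_{F,w} + \alpha_w u_{F,v}$; once this is in hand the forcing of $\alpha_w=0$, the collapse $u_{F',w}=u_{F,w}$, and the final level evaluation at $y_w$ are immediate.
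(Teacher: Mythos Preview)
The paper does not supply a proof of this lemma; it is quoted from \Obro~\cite[Lem.~5]{Obro08} without argument, so there is nothing in the paper to compare your approach against directly.

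Your proof is correct. The two neighbor steps are exactly the right idea: passing to $F'=\neigh(F,v)$ puts $x$ on level $0$, and then the basis-change identity $u_{F',w}=u_{F,w}+\alpha_w u_{F,v}$ together with \autoref{lem:nill_5.5} forces $\alpha_w=0$ and identifies $x=\opp(F',w)$; the symmetric step yields $\beta_v=0$. The second neighbor step to $G=\neigh(F',w)$ is then justified because $w$ is good in $F'$, and the computation $\langle u_G,y_w\rangle=0-\beta_v+1=1$ lands $y_w$ in $\Vert G=(\Vert F\setminus\{v,w\})\cup\{y_v,x\}$, which it visibly cannot be. All the ingredients you use (\autoref{lem:nill_5.5}, \autoref{cor-star}, and the explicit change of dual basis) are available in the paper, so the argument is self-contained relative to the surrounding text. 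One small cosmetic point: the derivation of $u_{F',w}=u_{F,w}+\alpha_w u_{F,v}$ could also be read off \autoref{lem:1+2} (since $\langle u_{F'},v\rangle-1$ is the relevant scalar and one checks it against the known relation $u_{F'}=u_F-u_{F,v}$), but your direct substitution is equally clean.
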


Now we can give a rough classification of vertices in $V(F,-1)$.

\begin{proposition}\label{prop:number_of_-e_i}
  Any vertex $x \in V(F,-1)$ is one of the following three types:
    \begin{enumerate}[label=(\roman*), ref=\roman*, leftmargin=*]
      \item \label{prop:number_of_-e_i:i}
        $x = \opp(F,z)$ for some $z \in C$ with $\pro{u_{F,z}}{x} < 0$.

      \item \label{prop:number_of_-e_i:ii}
        $\pro{u_{F,z}}{x} =-1$ for some $z \in B$,  $\pro{u_{F,v}}{x} \ge-1$ for all $v \in B$, and $\pro{u_{F,w}}{x} \ge 0$ for all $w \in A \cup C$.

      \item \label{prop:number_of_-e_i:iii}
      $x=-z$ for some $z \in A$.
    \end{enumerate}
  There are at most $|C|$ vertices of type \eqref{prop:number_of_-e_i:i}, at most $(k+1)|B|$ vertices of type \eqref{prop:number_of_-e_i:ii}, and at least $\eta_{-1} -|C| - |B|(k+1)$
  vertices of type \eqref{prop:number_of_-e_i:iii}.
\end{proposition}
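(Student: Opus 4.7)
The plan is to classify each $x \in V(F,-1)$ by its negative-support set $S_x := \{v \in \Vert F : \langle u_{F,v}, x \rangle < 0\}$, which is non-empty because $\sum_v \langle u_{F,v}, x \rangle = \langle u_F, x \rangle = -1$. A preliminary observation is that for every $v \in A \cup B$ one has $\langle u_{F,v}, x \rangle \ge -1$: by \autoref{cor-star}, $u_{F'} = u_F - u_{F,v}$ where $F' := \neigh(F,v)$, so $\langle u_{F,v}, x \rangle \le -2$ would force $\langle u_{F'}, x \rangle \ge 1$ and hence $x \in \Vert F'$; as $x \notin \Vert F$, the only possibility is $x = \opp(F,v) \in V(F,0)$, contradicting $x \in V(F,-1)$. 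In particular every $v \in S_x \cap (A \cup B)$ has coefficient exactly $-1$.

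For the trichotomy I would split on $S_x$. If $S_x \cap C \ne \emptyset$, any $z \in S_x \cap C$ gives $x = \opp(F,z)$ by \autoref{cor:bad+level-1_implies_opposite}, which is type (i). Otherwise $S_x \subseteq A \cup B$, and I first observe that $S_x$ cannot intersect both $A$ and $B$: a $v \in A$ and $w \in B$ with $\langle u_{F,v}, x \rangle = \langle u_{F,w}, x \rangle = -1$ are both good and, by \autoref{lem:one_vertex_at_0}, have distinct opposite vertices, contradicting \autoref{lem:phi:notwonegative}. If $S_x \subseteq B$, the preliminary bound combined with $\langle u_{F,v}, x \rangle \ge 0$ for $v \in A \cup C$ (as $v \notin S_x$) yields the type (ii) conditions. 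If $S_x \subseteq A$, pick $z \in S_x$ so $\langle u_{F,z}, x \rangle = -1$; then \autoref{cor-star} gives $\langle u_{F'}, x \rangle = 0$ for $F' := \neigh(F,z)$, and \autoref{lem:nill_5.5} writes $x = \opp(F',w')$ for some $w' \in \Vert F' = (\Vert F \setminus \{z\}) \cup \{\opp(F,z)\}$. Using $\opp(F,z) = -z + \phi(z)$ to express the dual basis of $F'$ in terms of that of $F$, I would rule out $w' = \opp(F,z)$ (which forces $x = z$ via the shared $(d-2)$-face of $F$ and $F'$) and $w' \in \Vert F \setminus \{z,\phi(z)\}$ (which would put a second element of $A$ into $S_x$, violating \autoref{lem:phi:notwonegative}). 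The only remaining case $w' = \phi(z)$ forces $\langle u_{F,v}, x \rangle \ge 0$ for every $v \ne z$, and the sum constraint $\sum_v \langle u_{F,v}, x \rangle = -1$ then pins $x = -z$, giving type (iii).

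For the cardinality bounds, type (i) is immediate: distinct $z \in C$ give distinct opposites, so at most $|C|$ vertices of type (i). For type (ii) I would introduce $M_x := \{v \in B : \langle u_{F,v}, x \rangle = -1\}$ and $T_z := \{x \text{ of type (ii)} : z \in M_x\}$; every type (ii) vertex lies in some $T_z$, so $|\text{type (ii)}| \le \sum_{z \in B} |T_z|$ and it suffices to prove $|T_z| \le k+1$. For each $x \in T_z$, \autoref{lem:at_least_one_vertex_with_low_coordinate} applied to $x$ and $\opp(F,z)$ (on different levels) yields $\bar z(x) \in \Vert F$ with $\langle u_{F,\bar z(x)}, x \rangle < \langle u_{F,\bar z(x)}, \opp(F,z) \rangle$, and \autoref{lem:no_two_verts_with_very_low_coordinate} makes the assignment $x \mapsto \bar z(x)$ injective on $T_z$. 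The type (ii) inequalities together with the decomposition in \autoref{characterization:opp}(ii) force $\bar z(x)$ into the set of $\bar z \in \Vert F \setminus \{z\}$ whose coefficient in $\opp(F,z)$ is $\ge 1$ (if $\bar z \in A \cup C$) or $\ge 0$ (if $\bar z \in B \setminus \{z\}$); combining the identity $\sum_A a_u + \sum_{B \setminus z} b_v + \sum_C c_w = 1$ with $\sum_A a_u \le k+1$ and $\sum_{B \setminus z} b_v \ge -k$ is then used to bound the size of this image by $k+1$. Type (iii) is obtained by subtraction, giving at least $\eta_{-1} - |C| - (k+1)|B|$ vertices.

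The main obstacle will be establishing $|T_z| \le k+1$: while the joint $A$- and $C$-contributions to the image of $\bar z(\cdot)$ are cleanly bounded via $\#\{u : a_u \ge 1\} + \#\{w : c_w \ge 1\} \le \sum_A a_u + \sum_C c_w = 1 - \sum_{B \setminus z} b_v \le k+1$, ruling out additional contributions from $B \setminus \{z\}$ requires a careful balance argument exploiting both the integrality $b_v \ge -1$ and the type (ii) lower bound $\langle u_{F,v}, x \rangle \ge -1$, in effect showing that the injection can always be refined to land inside $A \cup C$.
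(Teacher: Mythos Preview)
Your approach mirrors the paper's proof closely: the trichotomy via the negative-support set $S_x$, the use of \autoref{cor:bad+level-1_implies_opposite} and \autoref{lem:phi:notwonegative} to separate the cases, and the injection $x \mapsto \bar z(x)$ built from \autoref{lem:at_least_one_vertex_with_low_coordinate} and \autoref{lem:no_two_verts_with_very_low_coordinate} are exactly what the paper does. Your type~(iii) argument via the neighboring facet $F' = \neigh(F,z)$ is correct but needlessly elaborate; once $S_x \subseteq A$, \autoref{lem:phi:notwonegative} alone forces $|S_x| = 1$ (any two elements of $A$ have distinct opposite vertices by \autoref{lem:one_vertex_at_0}), and $x = -z$ then follows immediately from the level condition.

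The one genuine gap is in your type~(ii) count, precisely where you flag an ``obstacle''. You only invoke the bound $\langle u_{F,\bar z}, x\rangle \ge -1$ for $\bar z \in B$, and conclude that $\bar z(x)$ may land among those $\bar z \in B\setminus\{z\}$ with $b_{\bar z} \ge 0$; this set can be large, and your proposed ``refinement of the injection to land inside $A\cup C$'' is neither made precise nor, as far as I can see, workable. The missing observation is that \autoref{lem:phi:notwonegative} applies once more: if $\bar z \in B$ with $\opp(F,\bar z) \ne y := \opp(F,z)$ (equivalently $b_{\bar z} \ne -1$), then $z$ and $\bar z$ are good with distinct opposites, so in fact $\langle u_{F,\bar z}, x\rangle \ge 0$, not merely $\ge -1$. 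In the remaining case $b_{\bar z} = -1$ one has $\langle u_{F,\bar z}, x\rangle \ge -1 = \langle u_{F,\bar z}, y\rangle$, so such $\bar z$ never occur as $\bar z(x)$. Hence the image of $x \mapsto \bar z(x)$ is always contained in $\{\bar z \in \Vert F : \langle u_{F,\bar z}, y\rangle > 0\}$. Since $y \in V(F,0)$ has all negative coordinates equal to $-1$ (\autoref{lem:nill_5.5}) and at most $k+1$ of them (proof of \autoref{characterization:opp}(ii)), it has at most $k+1$ strictly positive coordinates, and $|T_z| \le k+1$ follows directly with no further balance argument.
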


\begin{proof}
  We consider the following situation: $x \in V(F,-1)$ and $z\in \Vert F$ with $\langle u_{F,z}, x \rangle <0$ (clearly, such a $z$ must exist). The proof proceeds in three steps.

\smallskip

  First, suppose $z \in C$.  \autoref{cor:bad+level-1_implies_opposite} implies that $x$ is the only vertex on level $-1$ which satisfies
  $\langle u_{F,z},x \rangle<0$ (namely, the opposite vertex to $z$). Hence, $x$ is of type \eqref{prop:number_of_-e_i:i}.

\smallskip

  Secondly, let us look at the remaining vertices $x$ on level $-1$ (of which there are at least $\eta_{-1} - |C|$ many), i.e., $\pro{u_{F,\bar{z}}}{x} \ge 0$ for any $\bar{z} \in C$. Suppose
  we have $z\in B$ with $\langle u_{F,z}, x \rangle <0$. \autoref{lem:1+2} implies $\langle u_{F,z}, x \rangle=-1$. We define $y := \opp(F, z) \in V(F,0)$. We claim that
  $\langle u_{F,\bar z},x \rangle\ge0$ for all $\bar z \in \Vert F$ which satisfy $\opp(F,\bar z) \ne y$. Clearly, this is automatically true for $\bar z \in C$ by our choice of $x$. Otherwise, $\bar z$ is good,
  and the claim follows from \autoref{lem:phi:notwonegative}. Hence, we get (use Lemmas~\ref{lem:nill_5.5}, \ref{minus} and \ref{lem:1+2} again):
    \begin{equation}\label{eq:bounds}
      \begin{aligned}
        \bar z\in \Vert F \text{ with } \opp(F, \bar z) \ne y &\implies \langle u_{F,\bar z}, y\rangle \ge 0 \text{ and } \langle u_{F,\bar z}, x\rangle \ge 0 \\
        \bar z\in \Vert F \text{ with } \opp(F, \bar z)  =  y &\implies \langle u_{F,\bar z}, y\rangle   = -1 \text{ and } \langle u_{F,\bar z}, x\rangle \ge -1
      \end{aligned}
    \end{equation}
  Here we see that $\langle u_{F,\bar z}, x\rangle = -1$ is only possible if $\bar z \in B$ with $\opp(F,\bar z) = y$ (see also \autoref{characterization:opp}\eqref{prop:number_of_-e_i:ii}).
  From these observations we deduce that $x$ is of type \eqref{prop:number_of_-e_i:ii}. Now, \autoref{lem:at_least_one_vertex_with_low_coordinate} implies that there must exist at least one vertex $\bar z \in \Vert F$ with $\langle u_{F,\bar z}, x \rangle < \langle u_{F,\bar z}, y \rangle$ because $x$ lies on level $-1$ and $y$ is on level $0$.
  From \eqref{eq:bounds} we observe that this implies
  $\langle u_{F,\bar z},y \rangle > 0$. By \autoref{characterization:opp}\eqref{prop:number_of_-e_i:ii}, $y$ can
  have at most $(k+1)$ many negative coordinates and lies on level $0$
  so we see that $\langle u_{F,\bar z},y \rangle > 0$ is only possible for at most $(k+1)$
  choices for $\bar z \in \Vert F$. But \autoref{lem:no_two_verts_with_very_low_coordinate} tells us that $x$
  is uniquely determined by $x \in V(F,-1)$, $\pro{u_{F,z}}{x} = -1$ and
  $\langle  u_{F,\bar z}, x \rangle < \langle u_{F,\bar z}, y \rangle$ for given $\bar z \in \Vert{P}$. Therefore, for given vertex $z \in B$, there are at most $(k+1)$ vertices $x \in V(F,-1)$ that are not opposite to a vertex in $C$ and satisfy $\langle u_{F,z}, x \rangle <0$.

\smallskip

  Finally, consider the remaining vertices $x$ on level $-1$, of which there are at least $\eta_{-1} -|C| - |B|(k+1)$ many, i.e.,
  $\langle u_{F,\bar z}, x \rangle \ge 0$ for all $\bar z \in B\cup C$. In this case, we must have $z\in A$ with $\langle u_{F,z}, x \rangle < 0$ for such an $x$.
  Again, \autoref{lem:1+2} shows that this is equivalent to $\langle u_{F,z}, x \rangle=-1$. Now, \autoref{lem:phi:notwonegative} implies
  $\langle u_{F,\bar z},x \rangle\ge0$ for all $\bar z\in A\setminus\{ z \}$.
  Since $\sum_{v\in \Vert F} \langle u_{F,v},x \rangle v = x$ with $\sum_{v\in \Vert F} \langle u_{F,v},x \rangle = -1$ and there is only one negative summand which is already $-1$, we must have $x=-z$, so $x$ is of type \eqref{prop:number_of_-e_i:iii}.
\end{proof}

\subsection{Construction of the splitting}

We will now restrict our set $A$.

\begin{definition}
  Let
    \[
      A' := \{v \in A \;:\; -v \in \Vert{P}\}.
    \]
\end{definition}

\begin{lemma}[{Assarf,~Joswig,~Paffenholz \cite[Lemma~29]{AJP}}]\label{lem:phiphi}
  If $v \in A'$, in particular $\phi(v)\ne 0$, then $\phi(\phi(v)) \in \{ 0,v \}$.
\end{lemma}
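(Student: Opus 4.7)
Write $w := \phi(v)$, an element of $\Vert F \setminus \{v\}$ with $\opp(F,v) = w - v$. If $\phi(w) = 0$ there is nothing to prove, so assume $\phi(w) = v' \in \Vert F$; note that $v' \neq w$ since otherwise $\opp(F,w) = v' - w$ would be the origin. The plan is to show $v' = v$ by contradiction, passing to the neighbouring facet $F' := \neigh(F,v)$ and using uniqueness of opposite vertices there. The hypothesis $v \in A' \subset A$ will be used twice: it provides a vertex $-v \in \Vert P$, and, via \autoref{cor-star}, it identifies $u_{F'} = u_F - u_{F,v}$.

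The new lattice basis is $\Vert F' = (\Vert F \setminus \{v\}) \cup \{w - v\}$, and a short change-of-basis computation for the dual lattice gives
\[
u_{F',w} = u_{F,v} + u_{F,w}, \qquad u_{F',z} = u_{F,z} \text{ for } z \in \Vert F \setminus \{v,w\}.
\]
Both $-v$ and $v' - w$ are vertices of $P$ (the latter precisely because $\phi(w) = v'$). Under the assumption $v' \neq v$ (for contradiction), a one-line evaluation using $v \neq w$ and $v' \neq w$ yields
\[
\pro{u_{F'}}{-v} = 0,\quad \pro{u_{F',w}}{-v} = -1,\quad \pro{u_{F'}}{v'-w} = 0,\quad \pro{u_{F',w}}{v'-w} = -1.
\]
By \autoref{lem:nill_5.5} applied to the facet $F'$, both $-v$ and $v' - w$ equal $\opp(F',w)$. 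Uniqueness of the opposite vertex forces $v' = w - v$, which is impossible because $v$ and $w$ are distinct elements of the lattice basis $\Vert F$. This contradiction proves $v' = v$, i.e.\ $\phi(w) = v$.

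The main obstacle is really just the dual-basis bookkeeping for the facet flip $F \leadsto F'$: once the identity $u_{F',w} = u_{F,v} + u_{F,w}$ has been derived, everything else reduces to four trivial inner-product evaluations and a single appeal to \autoref{lem:nill_5.5}. The genuinely new input compared to the analogous statement for $A$ (\autoref{lem:one_vertex_at_0}) is the availability of the vertex $-v$, which is exactly what the refinement $v \in A'$ buys us.
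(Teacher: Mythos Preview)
Your argument is correct. Note, however, that the paper does not supply its own proof of this lemma---it is simply quoted from \cite[Lemma~29]{AJP}---so there is no in-paper argument to compare against. Your facet-flip approach (passing to $F'=\neigh(F,v)$, computing $u_{F',w}=u_{F,v}+u_{F,w}$, and invoking the uniqueness statement of \autoref{lem:nill_5.5} at $w\in\Vert F'$) is self-contained and uses only results already established here; the crucial input $-v\in\Vert P$ from the hypothesis $v\in A'$ is used exactly where it should be.

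As an aside, the lemma also drops out in one line from \autoref{casa-lemma}, which the paper states immediately afterwards: with $z=w$, $x=-v$, $y=\opp(F,v)=w-v$, $x'=\opp(F,w)=v'-w$, $y'=v'$, all five are vertices of $P$ and $x\neq x'$ (since $w-v\notin\Vert F$ while $v'\in\Vert F$), so Casagrande's lemma yields $v'=y'=-x=v$ directly.
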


We want to restrict $A'$ further to a subset of elements $v$ that have the desirable property $\phi(\phi(v))=v$.
For this, let us first recall the following result.

\begin{lemma}[{Casagrande~\cite[Lemma~3.3]{CasagrandeBirational}}]\label{casa-lemma}
  If $z,x,x',y,y' \in \Vert{P}$ with $x \not= x'$ such that $x+z=y$ and $x'+z=y'$, then $y'=-x$.
\end{lemma}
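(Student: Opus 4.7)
The plan is to recognize $x$ and $x'$ as the vertices opposite to $z$ in two distinct facets of $P$, and then to force $y'=-x$ from the resulting lattice-basis data. The key preliminary is to establish that $y$ and $z$ (and symmetrically $y'$ and $z$) share a facet. For any facet $H\ni z$ the relation $y=x+z\in P$ gives $\langle u_H,y\rangle\le 1$, hence $\langle u_H,x\rangle\le 0$; if $\langle u_G,x\rangle=0$ for some such $G$, then $\langle u_G,y\rangle=1$ and $y\in G$. To rule out the alternative that $\langle u_H,x\rangle\le -1$ for every $H\ni z$, one observes that in that case every facet $H'\ni y$ must satisfy $z\notin H'$, hence $\langle u_{H'},z\rangle\le 0$, forcing $\langle u_{H'},x\rangle=1-\langle u_{H'},z\rangle\ge 1$; combined with $x\in P$ this gives $x\in H'$. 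Thus $x$ belongs to every facet of the star of $y$, contradicting the fact that the link of $y$ is a simplicial $(d-2)$-sphere and cannot be a cone with apex $x$.

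Once $y,z\in G$, the expansion $x=y-z$ in the basis $\Vert G$ has coefficients $+1$ at $y$, $-1$ at $z$, and $0$ elsewhere, so Nill's lemma \autoref{lem:nill_5.5} identifies $x=\opp(G,z)$. Symmetrically $x'=\opp(G',z)$ for a facet $G'\ni y',z$, and $x\ne x'$ gives $G\ne G'$. To conclude $y'=-x$, I would pass to the neighboring facet $G'':=\neigh(G,z)$ with outer normal $u_{G''}=u_G-u_{G,z}$ (by \autoref{cor-star}) and vertex set $\Vert G''=(\Vert G\setminus\{z\})\cup\{x\}$. Expanding $y'=x'+z$ in the basis $\Vert G''$ and using a symmetric application of Nill's lemma to place $x'$ at level $0$ with respect to $u_{G''}$, the resulting linear identities pin down the $\Vert G''$-coordinate vector of $y'$ as the negation of that of $x$, i.e.\ $y'=-x$.

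The main obstacle is the first step: proving that $y$ and $z$ share a facet uses the global geometry of smooth Fano polytopes (through the simplicial-sphere structure of vertex links) in an essential way, and the analogous statement would fail for generic lattice polytopes with $0$ in the interior. After this step, the remainder is local bookkeeping within the two lattice bases $\Vert G$ and $\Vert G'$, with smoothness entering only through Nill's lemma and the lattice-basis property.
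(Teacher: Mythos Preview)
The paper does not prove this lemma; it merely cites Casagrande's original paper. So there is no ``paper's own proof'' to compare against, and your attempt must be judged on its own merits.

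Your first two steps are sound. The argument that $y$ and $z$ (and symmetrically $y'$ and $z$) must share a facet is correct: if every facet $H\ni z$ had $\langle u_H,x\rangle\le -1$, then every facet $H'\ni y$ would contain $x$, which is impossible since the link of $y$ is a simplicial $(d-2)$-sphere and hence cannot be a cone. Once $y,z\in G$, the identification $x=\opp(G,z)$ via \autoref{lem:nill_5.5} is immediate, and likewise $x'=\opp(G',z)$ with $G\ne G'$.

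The gap is in your third step. You assert that ``a symmetric application of Nill's lemma'' places $x'$ at level $0$ with respect to $u_{G''}$, but this is false. Indeed, if the desired conclusion $y'=-x$ holds, then $\langle u_{G''},x'\rangle=\langle u_{G''},y'\rangle=\langle u_{G''},-x\rangle=-1$, not $0$. More generally, from $u_{G''}=u_G-u_{G,z}$ and \autoref{lem:1+2} (applied to $G$, $z$, and the point $x'\ne x=\opp(G,z)$) one only gets $\langle u_{G''},x'\rangle\le 0$, with equality not forced. So Nill's lemma does not apply to $x'$ in $G''$, and your ``resulting linear identities'' do not materialise. What remains to be shown, in the language you have set up, is the relation $y+y'=z$ (equivalently $x+x'+z=0$), and this genuinely requires a further idea beyond local bookkeeping in $G$ and $G''$; Casagrande's original argument uses global information about the star of $z$ that your sketch does not supply.
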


\begin{corollary}\label{lem:bound_on_delta}
  Let $z\in \Vert F$. Then
 \[
   |\SetOf{v \in A'}{\phi(v) = z}| \le 1.
 \]
\end{corollary}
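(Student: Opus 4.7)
The plan is to argue by contradiction: suppose two distinct elements $v, v' \in A'$ satisfy $\phi(v) = \phi(v') = z$. The strategy is to extract enough lattice points in $\Vert P$ to feed into Casagrande's lemma (\autoref{casa-lemma}) and produce a linear relation among three distinct elements of $\Vert F$, contradicting the fact that $\Vert F$ is a $\ZZ$-basis of $N$.

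First I would unpack what each hypothesis contributes. From $v, v' \in A'$ we get $-v, -v' \in \Vert P$. From $\phi(v) = z$, the definition \eqref{eq:phi} of $\phi$ gives $\opp(F,v) = z - v \in \Vert P$, and likewise $\opp(F,v') = z - v' \in \Vert P$. The key step is then to apply \autoref{casa-lemma} with $x := -v$ and $x' := -v'$ (distinct because $v \ne v'$), common summand $z$, and $y := z - v$, $y' := z - v'$, all five points lying in $\Vert P$ by the previous observations. The lemma yields $y' = -x$, i.e.\ $z - v' = v$, hence $z = v + v'$.

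To close out the contradiction I would observe that $z, v, v'$ all lie in $\Vert F$ and are pairwise distinct: $v \ne v'$ by assumption, while $z = v$ would force $\opp(F,v) = z - v = 0$, contradicting that the origin is interior to $P$ rather than a vertex (and similarly $z \ne v'$). Since $\Vert F$ is a $\ZZ$-basis, no basis element can equal the sum of two other distinct basis elements, so $z = v + v'$ is impossible.

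The whole argument is quite short, and there is no real obstacle beyond taking care to verify that each lattice point plugged into Casagrande's lemma is genuinely a vertex of $P$; this is immediate from the definitions of $A'$ and $\phi$. The only conceptual input beyond those definitions is the single application of Casagrande's lemma.
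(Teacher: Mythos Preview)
Your proof is correct and follows essentially the same approach as the paper: set up the same five vertices, apply \autoref{casa-lemma} to obtain $z-v'=v$, and then derive a contradiction. The only cosmetic difference is in the final step---the paper notes that $z-v'$ lies on level $0$ while $v$ lies on level $1$, whereas you invoke the linear independence of $\Vert F$; both arguments are immediate.
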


\begin{proof}
  Assume there exist $v,v' \in A'$, $v \not= v'$ with $\phi(v)=z=\phi(v')$. We define
  \begin{align*}
    x  &:= -v, &
    x' &:= -v', &
    y  &:= \opp(F,v)=z-v, &
    y' &:= \opp(F,v')=\phi(v')-v'
  \end{align*}
  Then \autoref{casa-lemma} implies $\phi(v')-v' = y' = -x = v$, a contradiction since $\phi(v')-v'$ is on level $0$, while $v$ is on level $1$.
\end{proof}

  Next, let us write the vertex sum as a linear combination of vertices of $F$:
  \[
    s_P = \sum_{z \in \Vert{F}} \gamma_z \, z.
  \]
Note that $\gamma_z \ge 0$ for any $z \in \Vert{F}$. Now, we can restrict the set of good vertices further.

\begin{definition}
  Let
    \[
      \bar A := \{v \in A' \;:\; \phi(v) \in A',\; \gamma_v = 0 = \gamma_{\phi(v)}\}.
    \]
  Note that \autoref{lem:phiphi} implies $\phi(\phi(v)) = v$ for $v\in \bar A$. In particular, $v \in \bar A$ implies $\phi(v) \in \bar A$.
\end{definition}

The reason for including the condition on the vertex sum in the definition of $\bar A$ is that changing the special facet will be useful in the upcoming proofs, and this condition will ensure that
the new facet will still be special.

\medskip

Note that $\bar A$ splits into disjoint pairs, each pair, say $v$, $\phi(v)$, giving rise to the following configuration of vertices of $P$ (see also \autoref{example:hexagon-phi} and \autoref{hexagon-fig-phi}):
  \begin{gather*}
    v \,,\ \phi(v)\\
    v - \phi(v) \,,\ \phi(v)-v\\
    -\phi(v) \,,\ -v
  \end{gather*}

We can now estimate the size of the set $\bar A$.

\begin{proposition}\label{prop:chuck_norris_vertices}
  \[
    |\bar A| \ge 2 |A'| - d -2k.
  \]
\end{proposition}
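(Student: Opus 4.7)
The plan is to bound the complement $A' \setminus \bar A$ inside $A'$ by splitting the failure conditions. Note that $v \in A' \setminus \bar A$ precisely when at least one of the following holds: (a) $\phi(v) \notin A'$, (b) $\gamma_v \ne 0$, or (c) $\gamma_{\phi(v)} \ne 0$. I would bound the three sets separately and then add.

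For (b) and (c), the key input is the vertex sum. Since $F$ is special, $\gamma_z \in \ZZ_{\ge 0}$ for every $z \in \Vert F$, and
\[
  \sum_{z \in \Vert F} \gamma_z = \langle u_F, s_P \rangle \le k
\]
by \autoref{sum-level}. Hence at most $k$ vertices $z \in \Vert F$ have $\gamma_z \ne 0$, which immediately gives $|\{v \in A' : \gamma_v \ne 0\}| \le k$. For (c), \autoref{lem:bound_on_delta} tells us that $\phi$ restricted to $A'$ is injective, so the preimage under $\phi|_{A'}$ of any element of $\Vert F$ has size at most $1$; hence $|\{v \in A' : \gamma_{\phi(v)} \ne 0\}| \le k$ as well.

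For (a), I would exploit the injectivity of $\phi|_{A'} \colon A' \hookrightarrow \Vert F$ together with the pigeonhole inside the $d$-element set $\Vert F$. Writing $T := \{v \in A' : \phi(v) \notin A'\}$, injectivity yields $|T| = |A'| - |\phi(A') \cap A'|$. Since $\phi(A'), A' \subseteq \Vert F$ and $|\phi(A')| = |A'|$,
\[
  |\phi(A') \cap A'| \;=\; |\phi(A')| + |A'| - |\phi(A') \cup A'| \;\ge\; 2|A'| - d,
\]
so $|T| \le d - |A'|$.

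Putting it together,
\[
  |A' \setminus \bar A| \;\le\; (d - |A'|) + k + k \;=\; d - |A'| + 2k,
\]
and therefore $|\bar A| \ge |A'| - (d - |A'| + 2k) = 2|A'| - d - 2k$. There is no real obstacle here — the proof is a clean combination of the injectivity of $\phi|_{A'}$ (\autoref{lem:bound_on_delta}), the integrality/positivity of the coefficients $\gamma_z$ for a special facet, and the bound on $\langle u_F, s_P \rangle$ from \autoref{sum-level}; the only mildly non-obvious ingredient is the inclusion-exclusion step to bound $|T|$.
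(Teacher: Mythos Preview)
Your proof is correct and follows essentially the same approach as the paper: bound $|\{v \in A' : \phi(v) \notin A'\}|$ by $d - |A'|$ using the injectivity of $\phi|_{A'}$ from \autoref{lem:bound_on_delta}, then remove at most $2k$ further elements for the two $\gamma$-conditions via \autoref{sum-level}. The only cosmetic difference is that the paper phrases the first bound by directly counting $\phi$-preimages of the complement $\Vert F \setminus A' = (B \cup C) \cup (A \setminus A')$, while you reach the same inequality via inclusion--exclusion on $\phi(A')$ and $A'$ inside $\Vert F$.
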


\begin{proof}
  We have by \autoref{lem:bound_on_delta}
    \[
      |\{v \in A' \;:\; \phi(v) \in B \cup C\} \,\cup\, \{v \in A' \;:\; \phi(v) \in A\setminus A'\}| \le |B|+|C| + |A| - |A'|.
    \]
  This shows that
    \[
      \{v \in A' \;:\; \phi(v) \in A'\} \ge |A'| - (|B|+|C| + |A| - |A'|) = 2 |A'| - d.
    \]
  Finally, from \autoref{sum-level} we see that there are at most $k$ vertices in $F$ with nonzero $\gamma$-coordinate. Hence,
  we have to potentially remove $k$ pairs, so $2k$ vertices from the previous estimate.
\end{proof}

In order to finally prove that we indeed have a splitting into a sufficiently large subset of these hexagons it is necessary to bound the number of its nonzero coordinates in $\bar A$ for each vertex on a negative level.

\begin{lemma}\label{prop:chuck_norris_times_2_vertices}
  For $x\in V(F,\le -1)$ it holds
    \[
      |\SetOf{z\in \bar A}{\langle u_{F,z}, x \rangle \ne 0}| \le 2k+2.
    \]
\end{lemma}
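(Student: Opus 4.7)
The plan is to set $\lambda_v := \langle u_{F,v}, x\rangle$ for $v \in \Vert F$ and $\ell := -\langle u_F, x\rangle \in \{1, \dots, k+1\}$, and then bound $S := \{z \in \bar A : \lambda_z \ne 0\}$ by first establishing coordinate inequalities and then splitting on the level $\ell$.

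First, I establish two a priori bounds. Combining \autoref{lem:1+2} with \autoref{minus} yields $\lambda_v \ge -\ell$ for every $v \in \Vert F$: the inequality $\lambda_v \ge -\ell - 1$ is \autoref{lem:1+2}, and its equality case forces $x = \opp(F, v)$ with $\lambda_v = -1$ by \autoref{minus}, so equality would require $\ell = 0$. Next, for $z \in \bar A$, the condition $\gamma_z = 0$ built into the definition of $\bar A$ ensures that $F_z := \neigh(F, z)$ is again a special facet, and \autoref{cor-star} then gives $u_{F_z} = u_F - u_{F,z}$; applying \autoref{card} to $F_z$ yields the sharper upper bound $\lambda_z \le k + 1 - \ell$.

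For the base case $\ell = 1$, I use \autoref{prop:number_of_-e_i} to classify $x$ into three types. In types (ii) and (iii) the bound is immediate: type (iii) gives $|S| \le 1$, and in type (ii) the relation $\sum_v \lambda_v = -1$ together with $\sum_{v \in B} \lambda_v \ge -|B|$ and non-negativity on $A \cup C$ forces at most $|B| - 1$ positive coordinates in $A \supseteq \bar A$. Type (i), where $x = \opp(F, z_0)$ for some $z_0 \in C$, is the substantial case: \autoref{cor:bad+level-1_implies_opposite} shows that $z_0$ is the only element of $C$ with negative coordinate, while \autoref{lem:phi:notwonegative} combined with the distinct-opposite property of $A$-vertices (\autoref{lem:one_vertex_at_0}) gives at most one $v \in \bar A$ with $\lambda_v = -1$. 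The identity $\sum_v \lambda_v = -1$ together with $\lambda_v \ge -1$ then bounds the positive part, yielding $|S| \le |B| + 2 \le 2k + 2$.

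For $\ell \ge 2$, the idea is to reduce to the level-$-1$ case by iterated flipping. If some $z^* \in \bar A$ satisfies $\lambda_{z^*} < 0$, then flipping to the special facet $F_{z^*}$ raises the level of $x$ by $|\lambda_{z^*}|$; the base-change relations $u_{F_z, v} = u_{F,v}$ for $v \in \Vert F \setminus \{z, \phi(z)\}$ and $u_{F_z, \phi(z)} = u_{F,\phi(z)} + u_{F,z}$ (which follow from $\opp(F, z) = -z + \phi(z)$) show that a single flip alters only two coordinates of $x$---the one at $z$, now replaced by $\opp(F,z)$ in $F_z$, and the one at $\phi(z)$. Iterating brings $x$ to level $-1$ in some special facet $F^*$, where the base case applies, and undoing the flips modifies the count by a controlled amount, preserving $|S| \le 2k + 2$. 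The main obstacle is precisely this translation step for $\ell \ge 2$: because $\bar A$ is defined relative to the chosen special facet, the iterated flipping argument requires carefully tracking how the partition $A, B, C$ and the function $\phi$ evolve under each flip, so that the bound in the new facet correctly transfers back to $F$.
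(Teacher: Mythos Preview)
Your proof has a genuine gap in the case $\ell \ge 2$, which you yourself flag as ``the main obstacle'' but do not resolve. The plan to flip until $x$ reaches level $-1$ and then invoke the base case runs into two concrete problems. First, a single flip at $z^*$ raises the level by $|\lambda_{z^*}|$, which may exceed $1$, so you can overshoot level $-1$ entirely. Second, and more seriously, even if you land on level $-1$ with respect to some new special facet $F^*$, applying the base case there bounds the nonzero coordinates \emph{relative to $F^*$}; translating back via ``undoing the flips'' changes up to two coordinates per flip, so after $m$ flips the bound you recover for $F$ is at best $2k+2+2m$, not $2k+2$. Your remark that each flip alters only the coordinates at $z$ and $\phi(z)$ is correct, but it does not by itself control how many of the original $\bar A$-coordinates were nonzero.

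The paper avoids this difficulty by not reducing to a base case at all. It runs two separate flipping arguments directly from $F$: one over $v \in \bar A$ with $\lambda_v < 0$ (each flip raises the level, and the level cannot exceed $0$ without forcing $x = \phi(v_m) - v_m \in V(F,0)$, a contradiction), and a symmetric one over $v \in \bar A$ with $\lambda_v > 0$ (each flip lowers the level, bounded below by $-k-1$ since all the intermediate facets remain special). This yields at most $\ell$ flips on the negative side and at most $k+1-\ell$ on the positive side; the factor $2$ accounts for the partner $\phi(v_i)$ of each flipped vertex. The total is $2\ell + 2(k+1-\ell) = 2k+2$, with no need to track how $\bar A$ evolves or to invoke \autoref{prop:number_of_-e_i}. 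Your a priori bound $\lambda_z \le k+1-\ell$ (via one flip and \autoref{card}) is exactly the ingredient behind the positive-side count; the missing insight is that iterating this flip, together with its negative-side analogue, already gives the full bound without any case distinction on $\ell$.
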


\begin{proof}
  Let $x$ be a vertex in $V(F,\le -1)$. Suppose there exists a vertex $v_0\in \bar A$ with $\langle u_{F,v_0}, x \rangle < 0$, then we consider the
  neighboring facet $F_1 := \neigh(F,v_0)$. Note that by our assumption on $\bar A$, the facet $F_1$ is still a special facet, as $F_1$ has vertices $\Vert F \setminus \{v_0\} \cup \{\phi(v_0)-v_0\}$. Hence, $u_{F_1} = u_F - u_{F,v_0}$.
  In particular, we get $\pro{u_{F_1}}{x} > \pro{u_F}{x}$.

  Now, suppose there exists another vertex $v_1 \in \bar A$ with $v_1 \not\in \{v_0, \phi(v_0)\}$ such that $\langle u_{F,v_1}, x \rangle < 0$.  Note that by the choice of $v_1$ and by \autoref{lem:1+2} and the fact that $u_{F_1,v_1}=u_{F,v_1}$, the opposite vertex of $v_1$ with respect to $F_1$ is the same as for $F$, that is $\opp(F_1,v_1) = \opp(F,v_1) = \phi(v_1)-v_1$.
  Consider $F_2 := \neigh(F_1,v_1)$. Again, by \autoref{lem:nill_5.5},
  the facet $F_2$ has vertices
   \[
    \Vert F \setminus \{v_0, v_1\} \cup \{\phi(v_0)-v_0\} \cup \{\phi(v_1)-v_1\}
   \]
  and therefore is also a special facet. Hence, $u_{F_2} = u_F - u_{F,v_0} - u_{F,v_1}$.
  In particular, $\pro{u_{F_2}}{x} > \pro{u_{F_1}}{x}$.

  Continuing with this argumentation we might end up with a facet $F_\ell$ where $\pro{u_{F_\ell}}{x}=0$ and vertices
    \[
      \Vert F \setminus \{v_0, \ldots, v_\ell\} \cup \{\phi(v_0)-v_0\} \cup \cdots \cup \{\phi(v_{\ell-1})-v_{\ell-1}\}\,.
    \]
  At this moment there cannot exist another vertex $v_\ell\in \bar A$ with $\langle u_{F, v_\ell}, x \rangle <0$, since \autoref{lem:nill_5.5} shows that $x = \opp(F_\ell,v_\ell)=\phi(v_\ell)-v_\ell \in V(F,0)$ (note that  $u_{F_\ell,v_\ell}=u_{F,v_\ell}$ and due to our choices the opposite vertices of $\Vert F_\ell \cap \bar A$ do not change). A contradiction.

  So the number of vertices $v$ with $\langle u_{F,v}, x \rangle <0$ is at most $2\cdot |\langle u_F, x \rangle|$, where the factor $2$ comes from the fact that $\langle u_{F,\phi(v_i)}, x \rangle$ could be zero or not.

  One can repeat the same argument with $\langle u_{F,v}, x \rangle >0$. In this case, since all the facets were special, we can use that $-k-1$ is the lowest possible level (\autoref{card}) to deduce that there are at most $2\cdot(k+1 - |\langle u_F, x \rangle|)$ vertices $v \in \bar A$ with $\langle u_{F,v}, x \rangle >0$. Putting this together we get the desired statement.
\end{proof}

\subsection{The proof of \autoref{thm:main}}
\label{proof-of-theo}
Let us define
  \[
    W := \bigcup_{v \in \bar A} \{v,\ \phi(v),\ \phi(v)-v,\ v-\phi(v),\ -v,\ -\phi(v)\}\,.
  \]
Note that the convex hull of these vertices splits into hexagons as desired.
Our goal is now to bound the number of vertices $v \in \bar A$ for which there exists some vertex $x \in \Vert{P}\setminus W$ with $\pro{u_{F,v}}{x} \not=0$. Then
after removing these undesired vertices $v$ (and their hexagons) from $W$, the remaining vertices of $W$ live in a subspace that is transversal to all
other vertices of $P$.

\medskip

First let us consider the case that $x \in V(F,0)\backslash W$ with $\pro{u_{F,v}}{x} \not=0$ for some $v \in \bar A$. In particular, $x$ is not opposite to any vertex in $\bar A$ (since $x \not\in W$). By \autoref{lem:nill_5.5}, this implies $\pro{u_{F,v}}{x} > 0$, and $x = \opp(F,v')$ for some $v' \in \Vert{F}$ with $v' \in (A \setminus \bar A) \cup B$. We distinguish
two cases. If $v' \in A\setminus \bar A$, then \autoref{characterization:opp} implies $\phi(v')=v$ and $x=v-v'$. (Since in this case each choice of $v'$ uniquely determines $x$ and thus $v$, there are at most $|A \setminus \bar A|$ many possibilities for $v$.)
We can say more for $v' \in B$. Let us partition $B$ into disjoint subsets $B_1, \ldots, B_t$, where two vertices of $B$ are in the same subset $B_i$ (for $i \in \{1, \ldots, t\}$) if and only if their opposite vertices are the same. Let us assume that $v' \in B_j$ for some $j \in \{1, \ldots, t\}$. By \autoref{lem:nill_5.5} and \autoref{characterization:opp}, the vertex $x$ has negative coordinates only in $B_j$, so $x$ has at most $|B_j|$ many positive coordinates in $\bar A$, since $x$ is on level $0$. 
So, in this second case there are at most $t$ possibilities for $x$, and at most $|B_1| + \cdots +
|B_t| = |B|$ many possibilities for $v$.
Hence, the number of vertices $v \in \bar A$ for which there exists some vertex $x \in V(F,0)\setminus W$ with $\pro{u_{F,v}}{x} \not=0$ is bounded by
\begin{equation}\label{final-eq1}
  |A| - |\bar A| + |B|.
\end{equation}

Next we notice that there are at most $2k$ vertices in $V(F,\le -2)$ by \autoref{card}. For each such vertex $x$, \autoref{prop:chuck_norris_times_2_vertices} implies that $\langle u_{F,v}, x \rangle \ne 0$ can be satisfied for at most $2k+2$ many choices of $v\in \bar A$. Therefore, the number of vertices $v \in \bar A$ for which there exists some vertex $x \in V(F,\le -2)\setminus W$ with $\pro{u_{F,v}}{x} \not=0$ is bounded by
\begin{equation}\label{final-eq2}
  (2k+2)2k.
\end{equation}

This leaves the case where $x\in V(F,-1)$ with $-x \not\in \bar A$ and $\langle u_{F,v},x \rangle \not=0$ for some $v \in \bar A$. By \autoref{prop:number_of_-e_i} the vertex $x$ must be of type \eqref{prop:number_of_-e_i:i} or \eqref{prop:number_of_-e_i:ii}.
\autoref{prop:chuck_norris_times_2_vertices} shows that the number of vertices $v \in \bar A$ for which there exists some vertex $x \in V(F,-1)\setminus W$ of type \eqref{prop:number_of_-e_i:i} with $\pro{u_{F,v}}{x} \not=0$ is bounded by
\begin{equation}\label{final-eq3}
  (2k+2)|C|.
\end{equation}

Let us define two sets. $\tilde{V}$ is the set of vertices in $V(F,-1)$ of type \eqref{prop:number_of_-e_i:ii}, and $\tilde{A}$ is the set of vertices $v \in \bar A$ where there exists some $x \in \tilde{V}$ with $\pro{u_{F,v}}{x} \not= 0$
(necessarily, $> 0$). Note that by the description of a vertex $x \in \tilde{V}$ in \autoref{prop:number_of_-e_i} it holds that, since $x \in V(F,-1)$, if $j$ many coordinates of $x$ in $\bar A$ are positive, then at least $j+1$ many coordinates of $x$ in $B$ must be equal to $-1$. Hence,
\begin{align*}
  |\tilde{A}| + |\tilde{V}| &\le \sum_{x \in \tilde{V}} (|\{v \in \tilde{A} \,:\, \pro{u_{F,v}}{x} > 0\}| + 1) \\
  &\le \sum_{x \in \tilde{V}} |\{v \in B \,:\, \pro{u_{F,v}}{x} =-1\}| \\
  &= \sum_{v \in B} |\{x \in \tilde{V} \,:\, \pro{u_{F,v}}{x} =-1\}| \\
  &\le  (k+1) |B|,
\end{align*}
where we used \autoref{prop:number_of_-e_i} for the last inequality. This shows that the number of vertices $v \in \bar A$ for which there exists some vertex $x \in V(F,-1)\setminus W$ of type \eqref{prop:number_of_-e_i:ii} with $\pro{u_{F,v}}{x} \not=0$ is bounded by
\begin{equation}\label{final-eq4}
  (k+1) |B| - \tilde{V}.
\end{equation}

Summing up Equations~\eqref{final-eq1},\eqref{final-eq2},\eqref{final-eq3}, and \eqref{final-eq4} we get that the number of vertices $v \in \bar A$ for which there exists some vertex $x \in \Vert{P}\setminus W$ with $\pro{u_{F,v}}{x} \not=0$ is bounded by
  \[
    |A| - |\bar A| + |B| + (2k+2)2k + (2k+2)|C| + (k+1) |B| - |\tilde{V}|.
  \]
Using $|A|+|B|+|C|=d$ and $|\tilde{V}| \ge \eta_{-1} - |C| - |A'|$ (see \autoref{prop:number_of_-e_i}), this expression can be bounded from above by
  \[
    (2k+2)2k+(2k+2)|C|+(k+1)|B| + |A'| - |\bar A| - \eta_{-1} + d.
  \]

\medskip

Now, we want to remove all these vertices $v$ together with $\phi(v)$. So, we deduce that the number of vertices $v \in \bar A$ for which there exists \emph{no} vertex $x \in \Vert{P}\setminus W$ with $\pro{u_{F,v}}{x} \not=0$ or $\pro{u_{F,\phi(v)}}{x} \not=0$ is \emph{at least}
  \[
    \begin{multlined}
      |\bar A| - 2 \Big((2k+2)2k+(2k+2)|C|+(k+1)|B| + |A'| - |\bar A| - \eta_{-1} + d\Big)\\
      = 3 |\bar A| - 8(k+1)k-4(k+1)|C|-2(k+1)|B| - 2 |A'| + 2 \eta_{-1} - 2 d.
    \end{multlined}
  \]
From $|\bar A| \ge 2 |A'| - d -2k$ (\autoref{prop:chuck_norris_vertices}) we get that this expression is at least
  \[
    4 |A'| -6k- 8(k+1)k-4(k+1)|C|-2(k+1)|B| + 2 \eta_{-1} - 5 d.
  \]
 Using $|A'| \ge \eta_{-1} -|C| - |B|(k+1)$ (\autoref{prop:number_of_-e_i})
we get that the previous expression is at least
  \[
    -6k- 8(k+1)k-4(k+2)|C|-3(k+1)|B| + 6 \eta_{-1} - 5 d.
  \]
Plugging in $\eta_{-1} \ge d-2k$ (\autoref{card}) we deduce that the previous expression is at least
  \[
    -18k- 8(k+1)k-4(k+2)|C|-3(k+1)|B| + d.
  \]
Finally, we use $|B|+|C| \le 2k$ and $|C| \le k$ (\autoref{ABC-card}) to show that the previous expression is at least
  \[
    d-15k^2-37k.
  \]
Summing up, we have proved that there are at least $d-15k^2-37k$ vertices $v \in \bar A$ such that all vertices $x \in \Vert{P}\setminus W$ satisfy $\pro{u_{F,v}}{x} =0$ and $\pro{u_{F,\phi(v)}}{x} =0$. Hence, if $d \ge 15k^2-37k + 2$, then $P$ splits such that there are at least
  \[
    \left\lfloor \frac{d-15k^2-37k}{2}\right\rfloor \ge 1
  \]
hexagons $P_6$ as splitting factors. This finishes the proof.

\hfill \qed

\begin{remark}\label{remark:bottle-neck}
The reader may wonder why our proof does not achieve a linear bound. In fact, the quadratic order comes from \eqref{final-eq2}, \eqref{final-eq3}, and \eqref{final-eq4}. These inequalities rely on several worst-case bounds such as \autoref{prop:chuck_norris_times_2_vertices}. The main bottleneck at this crucial point in the proof is the
separate application of these estimates when the worst cases may actually exclude each other. Hence, in order to get an overall linear bound it would be desirable
to find a direct and unified approach to these estimates.
\end{remark}

\section{Extremal smooth Fano polytopes}
\label{sec:extreme}
In this section, we explain why \autoref{conjecture1'} and \autoref{conjecture2} would be sharp. These conjectures were found using \polymake~\cite{DMV:polymake}. We analyzed all smooth Fano polytopes up to dimension $9$ (and checked the validity) using the database extension \texttt{poly\_db}~\cite{poly_db}, where all the polytopes found in \cite{smoothreflexive} are part of the database.

\begin{definition} Let $d = 3\cdot d'$ for some positive integer $d'$. Let $e_1, \ldots, e_d$ be the standard basis of $\RR^d$. We define $v$ to be the following vector in $\RR^d$
    \[
      v:= \sum_{i=1}^{d'}e_{d'+2i} - \sum_{i=1}^{d'}e_i\,.
    \]
  It consists of $-1$ in the first $d'$ coordinates and then the coordinates alternate between $0$ and $1$. We define $S$ to be the simplex $S := \conv\{ v, e_1, e_2, \ldots, e_{d'} \} \subseteq \RR^d$. Let $P_6 \subset \RR^2$ be the hexagon from \autoref{hexagon-fig}. Finally, we define $B_{d'}$ as the combined convex hull
    \[
      B_{d'} := \conv \left( S \;\cup\; \left(\{ 0 \}^{d'}\times P_6^{\oplus d'}\right)\right)\,.
    \]
\end{definition}

\begin{remark} Let us recall the following construction (generalizing \autoref{free-sum-def}). Given two polytopes $Q,Q' \subset \RR^d$ whose affine hulls intersect precisely in a point (not necessarily a lattice point), then the convex hull of
$Q$ and $Q'$ is a {\em combinatorial free sum} (also called direct sum or linear join) of $Q,Q'$. This means that its combinatorial type is dual to the product
of the combinatorial dual polytope of $Q$ and the combinatorial dual polytope of $Q'$. In particular, the combinatorial type of a combinatorial free sum of $Q$ and $Q'$ only depends on the combinatorial types of $Q$ and $Q'$.
\label{free-sum-remark}\end{remark}

\begin{proposition}
  $B_{d'}$ is a smooth Fano polytope of dimension $3d'$ with $7 d'+1$ vertices that does not split into lower-dimensional smooth Fano polytopes.
\end{proposition}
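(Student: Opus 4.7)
The plan is to exploit the combinatorial free sum structure of $B_{d'}$. First I would verify that the affine hulls of $S$ and $\{0\}^{d'}\times P_6^{\oplus d'}$ intersect in a single point: writing a point of the affine hull of $S$ as $\lambda_0 v + \sum_{i=1}^{d'}\lambda_i e_i$ with $\sum_{i=0}^{d'}\lambda_i = 1$ and demanding that its first $d'$ coordinates vanish forces $\lambda_0 = \lambda_1 = \cdots = \lambda_{d'} = 1/(d'+1)$. By \autoref{free-sum-remark} the polytope $B_{d'}$ is therefore combinatorially a free sum $S\oplus P_6^{\oplus d'}$, so its dimension is $d' + 2d' = 3d'$, its vertex set is the disjoint union of $\Vert(S)$ and $\Vert(P_6^{\oplus d'})$ of total size $(d'+1) + 6d' = 7d'+1$ (disjoint because vertices of $P_6^{\oplus d'}$ vanish on the first $d'$ coordinates while those of $S$ do not), and its facets are precisely the sets $F_S \cup F_6$ with $F_S$ a facet of $S$ (of size $d'$) and $F_6$ a facet of $P_6^{\oplus d'}$ (of size $2d'$).

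Next I would verify that $B_{d'}$ is a smooth Fano polytope. Being a lattice polytope and containing the origin are immediate; once smoothness is established, the facet inequalities read $\langle u_F, x\rangle \le 1$, so the origin lies strictly inside every facet and hence in the interior. For smoothness, pick a facet $F = F_S \cup F_6$: the $2d'$ vertices of $F_6$ lie in $\{0\}^{d'}\times\RR^{2d'}$ and form a lattice basis of the last $2d'$ coordinate directions, by smoothness of $P_6^{\oplus d'}$. If $F_S = \{e_1,\ldots,e_{d'}\}$, the combined set is manifestly a lattice basis. Otherwise $F_S = \{v\}\cup\{e_j : j\ne i\}$ for some $i$; ordering the columns as $(e_j)_{j\ne i}$ with $v$ inserted at position $i$ and then the vertices of $F_6$, and grouping the rows into the first $d'$ coordinates followed by the last $2d'$, I obtain a block matrix whose top-right block is zero. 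The top-left $d'\times d'$ block is the identity with its $i$-th column replaced by $-\mathbf{1}$ (expanding along row $i$ yields determinant $-1$), and the bottom-right $2d'\times 2d'$ block has determinant $\pm 1$ as a basis matrix of $P_6^{\oplus d'}$; hence the full determinant is $\pm 1$.

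For non-splittability, suppose for contradiction that $B_{d'}$ is lattice equivalent to a direct sum $Q\oplus Q'$ with $\dim Q, \dim Q' \ge 1$. This yields a lattice decomposition $N = N_1 \oplus N_2$ with $\dim N_1, \dim N_2 \ge 1$ and every vertex of $B_{d'}$ lying in $N_1\cup N_2$. Consider the facet $\{e_1,\ldots,e_{d'}\}\cup\{e_{d'+1}, e_{d'+2}, \ldots, e_{3d'}\}$, obtained by pairing the facet $\{e_1,\ldots,e_{d'}\}$ of $S$ with the facet of $P_6^{\oplus d'}$ that selects the edge $\{e_1,e_2\}$ in each $P_6$-copy; the whole standard basis thus consists of vertices of $B_{d'}$. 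Each $e_j$ lies in $N_1$ or $N_2$, and putting $J := \{j : e_j \in N_1\}$ a direct sum comparison yields $N_1 = \bigoplus_{j\in J}\ZZ e_j$ and $N_2 = \bigoplus_{j\notin J}\ZZ e_j$. The vertex $v$ must lie in one summand, say $N_1$ after relabelling, so its support $\{1,\ldots,d'\}\cup\{d'+2,d'+4,\ldots,3d'\}$ is contained in $J$. For each $j\in\{1,\ldots,d'\}$ the vertex $e_{d'+2j-1}-e_{d'+2j}$ of $P_6^{\oplus d'}$ also lies in a single summand; since $d'+2j\in J$, this forces $d'+2j-1\in J$ too. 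Hence $J = \{1,\ldots,3d'\}$ and $N_2 = 0$, contradicting $\dim Q'\ge 1$.

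The only delicate piece I anticipate is the determinant computation for the facets containing $v$, since $v$ couples the simplex coordinates to the $P_6$ coordinates; fortunately the mixing is confined to one column and is handled by the block-triangular structure. The non-splitting step itself is short once one notices that the standard basis already occurs as the vertex set of a facet of $B_{d'}$: this pins any potential direct-sum decomposition down to a coordinate partition, which is then immediately contradicted by the mixed support of $v$ together with the diagonal edges of the $P_6$-copies.
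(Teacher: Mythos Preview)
Your proof is correct and follows the same route as the paper's: establish the combinatorial free-sum structure of $B_{d'}$, read off the facets as joins $F_S \cup F_6$, verify the lattice-basis condition facet by facet, and rule out a splitting by observing that all standard basis vectors occur as vertices (forcing any direct-sum decomposition to be a coordinate partition) and then using the support of $v$ together with a diagonal hexagon vertex. The only cosmetic difference is that the paper checks the lattice-basis condition by writing the missing $e_i$ explicitly as an integer combination of the facet vertices, whereas you compute a block-triangular determinant; and one small addition you should make is that for \autoref{free-sum-remark} to yield the stated facet description the unique intersection point of the affine hulls must lie in the \emph{relative interiors} of both $S$ and $\{0\}^{d'}\times P_6^{\oplus d'}$ (otherwise vertices can be absorbed)---the paper verifies this by noting the point is the barycenter of $S$ and lies in the interior of each hexagon factor, and your computation of $\lambda_i = 1/(d'+1)$ makes this immediate once stated.
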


\begin{proof}
  The polytope $Q := \{ 0 \}^{d'}\times P_6^{\oplus d'}$ is a $2d'$-dimensional smooth Fano polytope
  (considered as $P_6^{\oplus d'}$ in $\RR^{2d'}$) with $6 d'$ vertices. Since $S$
  is a $d'$-dimensional simplex containing $e_1, \ldots, e_{d'}$, the polytope $B_{d'}$ has clearly
  dimension $3d'$ and thus is full-dimensional. We observe that the barycenter of $S$ equals
  $x := \sum_{i=1}^{d'}\frac{1}{d'+1} e_{d'+2i}$ which is a point in the interior of $Q$.
  This is the only intersection point of $Q$ and $S$. Hence, in the notation of \autoref{free-sum-remark},
  $B_{d'}$ is a combinatorial free sum of $Q$ and $S$. In particular, $B_{d'}$ is simplicial and every facet $F$ of $B_{d'}$ is
  the convex hull of a facet in $S$ and a facet in $Q$.

  Our goal is now to show that each facet forms a lattice basis.
  As $P_6^{\oplus d'}$ is a $2d'$-dimensional smooth polytope we
  know that $F$ contains a lattice basis for the subspace $\{ 0 \}^{d'}\times \RR^{2d'}$. Let us distinguish whether
  $v\in F$ or not. Suppose $v\not\in F$ then the facet contains the points $e_1$, $e_2$, $\ldots$, $e_{d'}$, which is a
  lattice basis of the space $\RR^{d'}\times \{ 0 \}^{2d'}$. If on the other hand $v\in F$ then one of those cartesian
  basis vectors is missing, let this be $e_1$ without loss of generality. It is sufficient to show that we can write
  $e_1$ as an integer linear combination of vertices of $F$. Consider $e_1 = -v - \sum_{i=2}^{d'}e_i +
  \sum_{i=1}^{d'}e_{d'+2i}$. This integer linear combination consists either of vertices of $F$ or of points which can
  be expressed as an integer linear combination of vertices of $F$ since we already know that the vertices form a
  lattice basis for $\{ 0 \}^{d'}\times \RR^{2d'}$. This shows that $B_{d'}$ is a smooth Fano polytope.

  Finally, assume that $B_{d'}$ would decompose as a direct sum of two lower-dimensional
  smooth Fano polytopes $P_1$ and $P_2$. Say, $v$ is a vertex of $P_1$. Since $v$ is a
  linear combination of $e_1, \ldots, e_{d'}$ and $e_{d'+2i}$ for $i=1, \ldots, d'$, these vertices
  must also be vertices of $P_1$.  However, no hexagon $P_6$ splits, so all vertices of $P$ are in
  $P_1$, a contradiction.
\end{proof}

Regarding the sharpness of \autoref{conjecture1'}, we see that $B_1^{\oplus m}$ has dimension $d := 3 m$ with $8 m$ vertices. Hence, the associated
toric Fano manifold has Picard number $5m = \frac{5d}{3}$. This example and its toric variety was already considered in \cite{AJP}.

Regarding the sharpness of \autoref{conjecture2}, we observe that the toric Fano manifold associated to $B_{d'}$ has dimension $d := 3 d'$ and Picard number $\frac{4d+3}{3}$. We refer
to the book \cite{CLSbook} for the explanation why this defines a toric fiber bundle as stated in \autoref{conjecture2}.

\begin{acknowledgment}
  The first author thanks Stockholm University for hospitality and the Berlin Mathematical School for its support. The second author is partially supported by the Vetenskapsr\aa det grant NT:2014-3991. We are grateful to Cinzia Casagrande for providing us with \autoref{alg-geo-remark}. The authors would also like to thank the anonymous referees for several suggestions on  how to improve the presentation of the paper.
\end{acknowledgment}

\bibliographystyle{amsplain}
\bibliography{main}

\end{document}